\theoremstyle{plain}
\newtheorem{thm}{Theorem}
\newtheorem{prop}[thm]{Proposition}
\theoremstyle{definition}
\theoremstyle{remark}
\newcommand{\QMod}{\mathrm{QMod}}
\newcommand{\QQ}{\mathbb{Q}}
\newcommand{\e}{\mathsf{1}}
\newcommand{\BC}{{\mathbb{C}}}
\newcommand{\BH}{{\mathbb{H}}}
\newcommand{\BP}{{\mathbb{P}}}
\newcommand{\BQ}{{\mathbb{Q}}}
\newcommand{\BZ}{{\mathbb{Z}}}
\newcommand{\pt}{{\mathsf{p}}}
\newcommand{\CA}{{\mathcal A}}
\newcommand{\CE}{{\mathcal E}}
\newcommand{\CF}{{\mathcal F}}
\newcommand{\Fp}{{\mathfrak{p}}}
\newcommand{\Fz}{{\mathfrak{z}}}
\newcommand{\ra}{{\ \longrightarrow\ }}
\newcommand{\blangle}{\big\langle}
\newcommand{\brangle}{\big\rangle}
\DeclareMathOperator{\Tr}{Tr}
\DeclareMathOperator{\Hilb}{Hilb}
\DeclareMathOperator{\Aut}{Aut}
\DeclareMathOperator{\id}{id}
\newcommand{\Mbar}{{\overline M}}
\newcommand{\M}{{\overline M}}
\newcommand{\ev}{\mathop{\rm ev}\nolimits}
\newcommand{\p}{\mathbb{P}}
\begin{document}

\baselineskip=16pt
\parskip=5pt

\title[Curve counting on $K3\times E$]
{Curve counting on $K3 \times E\, $, the Igusa cusp form $\chi_{10}\, $,  
and descendent integration}
\author {G. Oberdieck}
%\address {Department of Mathematics}
\address {ETH Z\"urich, Department of Mathematics}
\email {georgo@math.ethz.ch}
\author{R. Pandharipande}
%\address{Department of Mathematics}
\address{ETH Z\"{u}rich, Department of Mathematics}
\email {rahul@math.ethz.ch}
\date{June 2015}
\maketitle

\begin{abstract}
Let $S$ be a nonsingular projective $K3$ surface.
Motivated by the study of the Gromov-Witten theory of the
Hilbert scheme of points of $S$,
we conjecture a formula  
for the Gromov-Witten theory (in all curve classes) of
the Calabi-Yau 3-fold $S\times E$ where $E$ is an elliptic curve.
In the primitive case, our conjecture is expressed in terms of
the Igusa cusp form $\chi_{10}$ and matches a prediction 
via
heterotic duality by Katz, Klemm, and Vafa.
In imprimitive cases, our conjecture suggests a
new structure for the complete theory of descendent integration
for $K3$ surfaces. 
Via the Gromov-Witten/Pairs correspondence, a
conjecture for the reduced stable pairs theory of 
$S\times E$ is also presented. Speculations about
the motivic stable pairs theory of $S\times E$ are made.

The reduced Gromov-Witten theory of the 
Hilbert scheme of points of $S$ is much richer than $S\times E$.
The 2-point function of $\text{Hilb}^d(S)$ determines a
matrix with trace equal to the partition function of $S\times E$.
A conjectural form for the full matrix is given.

\end {abstract}

\setcounter{tocdepth}{1} 
\tableofcontents

\setcounter{section}{-1}

\section{Introduction}
Let $S$ be a nonsingular projective $K3$ surface, and let
$E$ be a nonsingular elliptic curve.
The 3-fold
$$X=S \times E$$
has trivial canonical bundle, and hence is Calabi-Yau.
Let 
$$\pi_1: X \rightarrow S\ , \ \ \ \pi_2:X\rightarrow E $$
denote the projections on the respective factors.
Let 
$$\iota_{S,e}: S \rightarrow X, \ \ \ \iota_{E,s}:E \rightarrow X$$
be inclusions of the fibers of $\pi_2$ and $\pi_1$
over points $e\in E$ and $s\in S$ respectively. We will often
drop the subscripts $e$ and $s$.

Let $\beta\in \text{Pic}(S) \subset H_2(S,\mathbb{Z})$ be a class
which is positive (with respect to any ample polarization), 
and let $d\geq 0$ be an integer.
The pair $(\beta,d)$
determines a class in
$H_2(X,\mathbb{Z})$ by
$$(\beta,d)= \iota_{S*}(\beta)+ \iota_{E*}(d[E])\ .$$
The moduli space
of stable maps 
$\overline{M}_{g}\big(X, (\beta,d)\big)$  
from connected genus $g$ curves to $X$ representing
the class $(\beta,d)$
is of virtual dimension 0.
However, because $S$ is holomorphic symplectic, the
virtual class vanishes{\footnote{See \cite{gwnl} for discussion 
of the virtual class for stable maps to $K3$ surfaces.}},
$$\Big[\overline{M}_{g}\big(X, (\beta,d)\big)\Big]^{vir}=0\ .$$  

The Gromov-Witten theory of $X$ is only interesting after
{\em reduction}. The reduced class
$\big[\overline{M}_{g}\big(X, (\beta,d)\big)\big]^{red}$
is of dimension 1. The elliptic curve $E$ acts on
$\overline{M}_{g}\big(X,(\beta,d)\big)$ with orbits of
dimension 1. The moduli space $\overline{M}_{g}\big(X, (\beta,d)\big)$
may be viewed virtually as a finite union of $E$-orbits.
The basic enumerative question here is {to count the
number of these $E$-orbits}.

The counting of the $E$-orbits is defined mathematically by the
following construction.
 Let $\beta^\vee\in H^2(S,\mathbb{Q})$
be {\em any} class satisfying
\begin{equation}\label{fwwf}
\langle \beta, \beta^\vee \rangle =1
\end{equation}
with respect to the intersection pairing on $S$.
For $g\geq0$, we define
\begin{equation}
\label{fccf}
\mathsf{N}^X_{g,\beta,d} = 
\int_{[\overline{M}_{g,1}(X,(\beta,d))]^{red}} \text{ev}_1^*\Big(\pi_1^*(\beta^\vee)
\cup \pi_2^*([0])\Big)\ ,
\end{equation}
where $0\in E$ is the zero of the group law.
The invariant $\mathsf{N}^X_{g,\beta,d}$ {\em is} the
virtual count of $E$-orbits discussed above.
Because of orbifold issues and the possible non-integrality of $\beta^\vee$,
$$\mathsf{N}^X_{g,\beta,d}\in \mathbb{Q}\ .$$

We will prove $\mathsf{N}_{g,\beta,d}^X$
does not depend upon the choice of $\beta^\vee$ satisfying \eqref{fwwf}.
The count $\mathsf{N}_{g,\beta,d}^X$ is invariant
under deformations of $S$ for which $\beta$ remains algebraic.
By standard arguments \cite{gwnl,CTC}, $\mathsf{N}_{g,\beta,d}^X$
then depends upon $S$ and $\beta$ {\em only} via
the norm square $$2h-2=\langle \beta, \beta \rangle$$ and the 
divisibility $m(\beta)$.
The count $\mathsf{N}_{g,\beta,d}^X$ is independent
of the complex structure of $E$. 
The notation
\begin{equation}\label{vvttt}
\mathsf{N}_{g,m,h,d}^X= \mathsf{N}_{g,\beta,d}^X
\end{equation}
will be used.

We conjecture here four basic properties of the 
reduced Gromov-Witten counts
$\mathsf{N}_{g,\beta,d}^X$:
\begin{enumerate}
\item[(i)] a closed formula for their generating series
in term of the Igusa cusp form $\chi_{10}$ in case $\beta$ is primitive,
\item[(ii)] a reduction rule expressing the invariants for imprimitive
$\beta$ in terms of the primitive cases determined by (i),
\item[(iii)] a Gromov-Witten/Pairs correspondence governing
              the reduced stable pairs invariants of $X$,
\item[(iv)] a precise formula relating $\mathsf{N}_{g,\beta,d}^X$
to the reduced genus 0 Gromov-Witten invariants of
the Hilbert scheme  $\text{Hilb}^d(S)$ of $d$ points of
the $K3$ surface $S$ in case $\beta$ is primitive. 
\end{enumerate}

In the $d=0$ case, the counts $\mathsf{N}^X_{g,\beta,0}$ specialize to
the basic integrals
\begin{equation}\label{kkv}
\mathsf{N}^X_{g,\beta,0} = 
\int_{[\overline{M}_{g}(S,\beta)]^{red}} (-1)^g \lambda_g
\end{equation}
of the reduced Gromov-Witten theory of $K3$ surfaces.{\footnote{Here,
$\lambda_g$ is the top Chern class of the Hodge bundle.}}
The integrals \eqref{kkv} are governed by the
Katz-Klemm-Vafa conjecture proven in \cite{PT2}.
Formula (i) specializes to the
Jacobi form of the KKV formula. Formulas for
BPS counts of $S\times E$ associated to primitive 
curve classes $\beta\in H_2(S,\mathbb{Z})$ are predicted
in \cite[Section 6.2]{KKV} via heterotic duality.
 After suitable interpretation of the Gromov-Witten
theory \eqref{fccf}, our formulas (i) match those of \cite{KKV}.

The imprimitive structure (ii) is new and takes a surprisingly
different form from the standard 3-fold Gromov-Witten multiple cover
theory.
In fact, conjecture (ii) suggests a
new structure for the complete theory of descendent integration
for $K3$ surfaces:
\begin{enumerate} 
\item[(v)] We conjecture  a reduction rule expressing the 
descendent integrals
$$\left\langle \prod_{i=1}^n \tau_{\alpha_i}(\gamma_i) \right\rangle_{g,\beta}^S=
\int_{[\overline{M}_{g,n}(S,\beta)]^{red}} \prod_{i=1}^n \psi_i^{\alpha_i}
\cup \text{ev}_i^\ast(\gamma_i)\ , \ \ \ \gamma_i\in H^*(S,\mathbb{Q})$$
 for imprimitive
$\beta$ in terms of the primitive cases.
\end{enumerate}
By \cite[Theorem 4]{MPT}, the descendent integrals in the
primitive cases are known to be coefficients of quasi-modular forms.

The GW/P correspondence (iii) for $X$ is straightforward
to conjecture. Because reduced
theories are considered, the correspondence here is not directly
a special case of the standard GW/P correspondence for
3-folds \cite{MNOP1,PT1}.

In \cite{BP,OPH,OPLC}, a triangle of parallel equivalences
relating the Gromov-Witten and Donaldson-Thomas theory
of $\mathbb{C}^2 \times \mathbb{P}^1$ to the quantum cohomology
of $\text{Hilb}^d(\mathbb{C}^2)$ was established.
Equivalences relating the counting theories of $S\times \mathbb{P}^1$
and the quantum cohomology of $\text{Hilb}^d(S)$ were expected.
However,
the conjectured formula (iv) 
relating $\mathsf{N}_{g,\beta,d}^X$
to the reduced genus 0 Gromov-Witten invariants of
$\text{Hilb}^d(S)$ is subtle:
an interesting correction term appears.

The 2-point function in the reduced genus 0 Gromov-Witten theory
of $\text{Hilb}^d(S)$ studied in \cite{GO} underlies (iv) and
motivates the entire paper. 
An interesting speculation
which emerges concerns the 3-fold geometry
\begin{equation}\label{lwwl}
S \times \mathbb{P}^1 \, / \,  \{S_0 \cup S_\infty\}
\end{equation}
relative to the $K3$-fibers over $0,\infty \in \mathbb{P}^1$:
\begin{enumerate} 
\item[(vi)] For primitive $\beta\in \text{Pic}(S)$, we conjecture a form 
for the matrix of relative invariants of the geometry \eqref{lwwl}.
\end{enumerate}
The reduced Gromov-Witten invariants of $S\times E$ arise
as the trace of the matrix (vi).

The precise statements of the above conjectures are given in Sections 4 and 5.
Conjecture A of Section 4.1 covers both (i) and (iv). Conjectures B, C, and D of Sections 4.2-4.4 
correspond to (ii), (v), and (iii) respectively. Conjectures E, F, and G of Section 5 address 
(vi) via the reduced Gromov-Witten theory of $\text{Hilb}^d(S)$.
Conjectures E and F were first proposed in \cite{GO} in a different but equivalent form.
Conjecture G is a direct Hilbert scheme / stable pairs correspondence (again
with a correction term).

We conclude the paper with speculations about the motivic stable
pairs invariants of $S\times E$. The theory should
simultaneously refine the Igusa cusp form $\chi_{10}$ and
generalize the formula of \cite{KKP}.

\vspace{10pt}
\noindent{\em Acknowledgements}.
We thank J. Bryan, S. Katz, A. Klemm,  D. Maulik, A. Pixton, R. Thomas,
and B. Szendroi for many conversations over the years
about the Gromov-Witten theory of $K3$ surfaces. We thank M. Raum for discussions about $\chi_{10}$.
The paper was partially written while both authors were  attending
the summer school
{\em Modern trends in Gromov-Witten theory} at the Leibniz Universit\"at
Hannover 
organized by O. Dumitrescu and N. Pagani in September 2014.

G.O. was supported by the grant SNF-200021-143274.
R.P. was partially supported by 
grants SNF-200021-143274 and ERC-2012-AdG-320368-MCSK.

%$$ $$
%$$ $$

\section{Rubber geometry}
\subsection{Definition}
Let $R$ be the 1-dimensional rubber target
obtained from the relative geometry
$$\mathbb{P}^1\, /\, \{0,\infty\}$$
after quotienting by the scaling action.
Let $Y$ be the straight rubber over
the $K3$ surface $S$,
$$ Y = S \times {R}\ .$$
The moduli space of stable maps to rubber,
$$\overline{M}^\bullet_{g}\big(Y,(\beta,d)\big)_{\nu,\nu^\vee}^{\sim}\, ,$$
has reduced virtual dimension 0. Here:
\begin{enumerate}
\item[(i)] the superscript $\bullet$
indicates the domain curve may be disconnected
(but no connected components are mapped to points),
\item[(ii)] $\beta\in \text{Pic}(S)$ and $d\geq 0$ is the
degree over $R$,
\item[(iii)] the relative conditions over $0$ and $\infty$
of the rubber are specified by partitions of $d$ weighted
by the cohomology of $S$,
\item[(iv)] $\nu$ and $\nu^\vee$ are
dual cohomology weighted partitions.{\footnote{Let $\{\gamma_i\}$
be a basis of $H^*(S,\mathbb{Z})$, let and $\{\gamma_i^\vee\}$ be the
dual basis. If 
$\nu= \{ (\nu_j,\gamma_{i_j}) \}$, then $\nu^\vee =
\{ (\nu_j, \gamma_{i_j}^\vee) \}$.}}
\end{enumerate} 
We define
\begin{equation}\label{pssp}
\widetilde{\mathsf{N}}_{g,\beta,d}^{Y\bullet}(\nu,\nu^\vee) =
\int_{[\overline{M}^\bullet_{g}(Y,(\beta,d))_{\nu,\nu^\vee}]^{red}} 1\ .
\end{equation}
The definition of $\widetilde{\mathsf{N}}_{g,\beta,d}^{Y\bullet}(\nu,\nu^\vee)$
requires no insertion as in \eqref{fccf}.

\subsection{Disconnected invariants of $S\times E$}
In order to relate the integrals \eqref{fccf} and \eqref{pssp},
we must consider the disconnected Gromov-Witten theory
of $$X=S\times E\ .$$ Let
$\overline{M}_{g,1}^\bullet\big(X, (\beta,d)\big)$  
be the moduli space of stable maps from
from possibily disconnected genus $g$ curves 
to $X$ (with 
no connected components mapped to points)
representing
the class $(\beta,d)$. 
After reduction, the moduli space is of dimension 2.
For $\beta^\vee\in H_2(S,\mathbb{Z})$ satisfing \eqref{fwwf},
we define
\begin{equation}
\label{fxxf}
\mathsf{N}^{X\bullet}_{g,\beta,d} = 
\int_{[\overline{M}^\bullet_{g,1}(X,(\beta,d))]^{red}} 
\text{ev}_1^*\Big(\pi_1^*(\beta^\vee)
\cup \pi_2^*([0])\Big)\ ,
\end{equation}
where $0\in E$ is the zero of the group law as before.

Because of the holomorphic symplectic form of $S$,
the stable maps with two connected components
mapping nontrivially to $S$ contribute 0 to \eqref{fxxf}.
Hence, the only nontrivial contibutions to \eqref{fxxf}
come from stable maps with a single marked  connected component
mapping nontrivially to $S$ and possibly other connected components
contracted over $S$.
By standard vanishing considerations,
all connected components contracted over $S$ must be
of genus 1. After evaluating the contracted contributions, we 
obtain the following relation:

\begin{prop} For all $g\geq 0$ and $\beta\in \text{\em{Pic}}(S)$, the disconnected and
connected counts for $X$ satisfy
$$\sum_{d\geq 0} \mathsf{N}^{X\bullet}_{g,\beta,d}\, \tilde{q}^{d}
=  \frac{\sum_{d\geq 0} \mathsf{N}^{X}_{g,\beta,d}\, \tilde{q}^{d}}
{\prod_{n>0} (1-\tilde{q}^n)^{24}}\ .$$
\end{prop}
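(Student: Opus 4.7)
The plan is to stratify the disconnected moduli space by the combinatorial type of the domain, use the reduced virtual class to kill all but one stratum, and then evaluate the multiplicative ``bubble'' correction coming from the surviving extra components.

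First I would split any disconnected stable map in $\overline{M}^\bullet_{g,1}\bigl(X,(\beta,d)\bigr)$ into its marked connected component and the unmarked extras, with class decomposition $(\beta,d)=\sum_i(\beta_i,d_i)$. On each combinatorial stratum the reduced virtual class factors as the reduced class on one chosen $\beta$-carrying component tensored with the ordinary virtual classes on the others. By the standard $K3$ vanishing of \cite{gwnl}—deforming $S$ away from the Noether--Lefschetz locus of any fixed $\beta_i \in \text{Pic}(S)$—the unreduced virtual class on a component with $\beta_i \ne 0$ vanishes. Hence all of $\beta$ must concentrate on a single component $C_\star$, and a parallel reduced-dimension check (the reduced factor on $C_\star$ has virtual dimension $1$ and the $H^4(X)$-insertion lives only at the marked point) forces the marked point to lie on $C_\star$ as well.

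Next I would analyze the extra components. Each has $\beta_i=0$ and $d_i\ge 1$, so it maps into a fiber $\{s_i\}\times E\cong E$ whose normal bundle in $X$ is the trivial rank-$2$ bundle $T_{s_i}S$. The obstruction bundle on the moduli of degree-$d_i$ elliptic covers therefore contains a factor whose top Chern class is essentially $\lambda_{g_i}^2$, and by Mumford's relation this vanishes unless $g_i=1$. Under the disconnected-genus convention $g(C)=\sum_i g_i-(m-1)$ (with $m$ the total number of components), having each extra genus equal to $1$ makes the main-component genus equal to $g$ independently of the number of extras. The remaining quantitative input is the contribution $c(n)$ of a single connected extra of genus $1$ and degree $n$: the moduli of the attachment locus $s_i\in S$ together with the trivial rank-$2$ normal-bundle obstruction contributes the Euler characteristic $\chi(S)=24$, while the étale degree-$n$ covers of $E$, weighted by automorphisms, contribute the classical factor $\sigma_1(n)/n$, yielding
$$c(n)=\frac{24\,\sigma_1(n)}{n}.$$

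Finally I would assemble the result. Summing over the number $k$ of unordered extras with symmetry factor $1/k!$,
$$\sum_{d\ge 0}\mathsf{N}^{X\bullet}_{g,\beta,d}\,\tilde q^{d}=\Bigl(\sum_{d_0\ge 0}\mathsf{N}^X_{g,\beta,d_0}\,\tilde q^{d_0}\Bigr)\cdot\exp\!\Bigl(\sum_{n\ge 1}c(n)\,\tilde q^{n}\Bigr),$$
and the elementary identity $-\sum_{n\ge 1}\log(1-\tilde q^{n})=\sum_{N\ge 1}\sigma_1(N)\,\tilde q^{N}/N$ converts the exponential into $\prod_{n>0}(1-\tilde q^{n})^{-24}$, giving the claimed formula. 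The main obstacle is the explicit computation $c(n)=24\,\sigma_1(n)/n$: correctly accounting for the reduced-vs-unreduced virtual-class interface across disconnected components, the Behrend--Fantechi obstruction theory on the moduli of elliptic covers of $E$, and the automorphism factors of isogenies takes genuine bookkeeping even though each ingredient is classical in the $S\times E$ setting; it is this step that pins down the factor of $24$ and therefore the exact form of the Dedekind-type denominator.
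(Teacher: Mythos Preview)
Your proof is correct and follows essentially the same approach as the paper. The paper's argument is a one-paragraph sketch: the holomorphic symplectic form forces at most one component to carry $\beta$, ``standard vanishing considerations'' force the contracted components to be genus $1$, and evaluating the contracted contributions yields the formula. You have filled in exactly these details, including the explicit computation $c(n)=24\,\sigma_1(n)/n$ via $\chi(S)=24$ and the genus-$1$ Gromov--Witten theory of $E$, and the exponentiation to $\prod_{n>0}(1-\tilde q^n)^{-24}$.

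One minor remark on your genus-$1$ reduction: the Euler class of the obstruction $T_S\otimes\mathbb{E}^\vee$ is not literally a multiple of $\lambda_{g_i}^2$; in Chern roots it is $\prod_i(\ell_i^2+c_2(T_S))=e_{g_i}(\ell^2)+c_2(T_S)\,e_{g_i-1}(\ell^2)$ since $c_1(T_S)=0$ and $c_2(T_S)^2=0$. Both elementary symmetric functions $e_{g_i}(\ell^2)$ and $e_{g_i-1}(\ell^2)$ vanish by Mumford's relation $\prod_i(1-\ell_i^2)=1$, so the conclusion for $g_i\ge 2$ stands, and for $g_i=1$ only the $c_2(T_S)$ term survives, giving the factor $24$ as you claim.
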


\subsection{Relating $X$ and $Y$}
Consider the degeneration of $E$ to a nodal rational curve $C$.
The degeneration, 
$$ X= S \times E \ \ \leadsto \ \ S \times C\ , $$
leads to a formula for $\mathsf{N}^{X\bullet}_{g,\beta,d}$
in terms of the relative geometry
$$ S \times \mathbb{P}^1 \, / \, \{S_0 \cup S_\infty\}\ .$$
Then, using standard rigidification of the rubber and the
divisor axiom, we obtain the relation:
\begin{equation}
\label{kssk}
 \mathsf{N}^{X\bullet}_{g,\beta,d} =
\Big[\sum_{\nu \in \mathcal{P}(d)} \Fz(\nu) 
\, \widetilde{\mathsf{N}}^{Y\bullet}_{\beta,d}
(\nu,\nu^\vee)\,  u^{2 \ell(\nu)}\Big]_{u^{2g-2}}\ .
\end{equation}
Here, $\mathcal{P}(d)$ is the set of all cohomology 
weighted partitions of $d$ with respect to a
fixed basis $\{\gamma_i\}$ of $H^*(S,\mathbb{Z})$.
The rubber series on the right side of \eqref{kssk} is
$$
\widetilde{\mathsf{N}}^{Y\bullet}_{\beta,d}
(\nu,\nu^\vee)=
\sum_{g\in \mathbb{Z}} u^{2g-2} \, 
\widetilde{\mathsf{N}}^{Y\bullet}_{g,\beta,d}
(\nu,\nu^\vee)\ .
$$
Finally, $\Fz(\nu)= |\Aut(\nu)| \prod_{i} \nu_i$ is the usual combinatorial factor. 
Formula \eqref{kssk} and Proposition 1 together imply the following result.

\begin{prop} Definition \eqref{fccf} for $\mathsf{N}^X_{g,\beta,d}$
 is {\em independent}
of the choice of $\beta^\vee$ satisfying \eqref{fwwf}.
\end{prop}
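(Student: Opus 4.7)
The plan is to leverage Proposition 1 together with formula \eqref{kssk} to reduce the claim to the observation that the rubber integrals $\widetilde{\mathsf{N}}^{Y\bullet}_{g,\beta,d}(\nu,\nu^\vee)$, being integrals of $1$ with no cohomological insertion, carry no $\beta^\vee$-dependence.

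First, I would reduce to the disconnected case. By Proposition 1,
\[
\sum_{d\geq 0}\mathsf{N}^{X\bullet}_{g,\beta,d}\,\tilde{q}^d = \frac{\sum_{d\geq 0}\mathsf{N}^X_{g,\beta,d}\,\tilde{q}^d}{\prod_{n>0}(1-\tilde{q}^n)^{24}}.
\]
Multiplying by $\prod_{n>0}(1-\tilde{q}^n)^{24}$ and extracting the coefficient of $\tilde{q}^d$ expresses $\mathsf{N}^X_{g,\beta,d}$ as a fixed $\mathbb{Q}$-linear combination of the disconnected counts $\mathsf{N}^{X\bullet}_{g,\beta,d'}$ for $d'\leq d$ with universal coefficients that do not see $\beta^\vee$ at all. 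It therefore suffices to establish the independence for the disconnected invariants.

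Next, I would inspect the right-hand side of \eqref{kssk},
\[
\mathsf{N}^{X\bullet}_{g,\beta,d} = \Big[\sum_{\nu\in\mathcal{P}(d)}\Fz(\nu)\,\widetilde{\mathsf{N}}^{Y\bullet}_{\beta,d}(\nu,\nu^\vee)\,u^{2\ell(\nu)}\Big]_{u^{2g-2}}.
\]
The combinatorial factor $\Fz(\nu)$ and the pairing of cohomology-weighted partitions $(\nu,\nu^\vee)$ are intrinsic to $(\beta,d)$, and the rubber integrals are integrals of $1$ against the reduced virtual class of the rubber moduli space, requiring no insertion on $S$. Thus the right-hand side is manifestly independent of $\beta^\vee$, and so is $\mathsf{N}^{X\bullet}_{g,\beta,d}$. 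Combined with the inversion in the previous paragraph, this yields the proposition.

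The step that requires the most care is the derivation of \eqref{kssk} itself, which is the only place where $\beta^\vee$ is absorbed. Under the degeneration $E\leadsto C$, the point insertion $\pi_2^*([0])$ on the second factor concentrates at the node and matches the relative boundary, while the divisor insertion $\pi_1^*(\beta^\vee)$ survives on the $S$-factor of the rubber $Y=S\times R$. Rigidifying the rubber and applying the divisor axiom to $\pi_1^*(\beta^\vee)$ produces exactly the factor $\langle\beta,\beta^\vee\rangle=1$, after which no trace of $\beta^\vee$ remains. Verifying that this accounting is legitimate for an arbitrary rational class $\beta^\vee$ satisfying \eqref{fwwf} (rather than merely an integral or ample representative), and that the reduced virtual class behaves well under the degeneration, is the technical heart of the argument; once accepted, the stated independence follows immediately.
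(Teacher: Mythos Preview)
Your argument is correct and follows exactly the approach indicated in the paper, which simply states that formula \eqref{kssk} and Proposition 1 together imply the result. You have spelled out the two steps (inverting Proposition 1 to reduce to the disconnected case, then observing that the right-hand side of \eqref{kssk} carries no $\beta^\vee$-dependence since the divisor axiom absorbs it via $\langle\beta,\beta^\vee\rangle=1$) with more detail than the paper provides, but the strategy is identical.
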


%\section{Primitive curve classes}

%Let $S$ be a nonsingular elliptically fibered $K3$ surface with 
%a section.
%Let $\s,\f\in H_2(S,\mathbb{Z})$ be the section and fiber classes
%respectively.
%Let 
%\[ \beta_h = \s + h \f, \quad \langle \beta_h, \beta_h \rangle = 2h-2. \]
%The curve class $\beta_h$ is primitive.{\footnote{Primitive is
%equivalent to divisibility 1.}}

\section{The Igusa cusp form  $\chi_{10}$}

Let $\BH_2$ denote the Siegel upper half space. The
standard coordinates are
\[ \Omega =
\begin{pmatrix} \tau & z \\ z & \widetilde{\tau} \end{pmatrix} \in \BH_2\, ,
\]
where
$\tau, \widetilde{\tau} \in \BH_1$ lie in the Siegel upper half plane,
$z \in \BC$, and
$$\text{Im}(z)^2 < \text{Im}(\tau) \text{Im}(\tilde{\tau})\ .$$
We denote the exponentials of the coordinates by
\[ p = \exp(2 \pi i z), \quad q = \exp(2 \pi i \tau), \quad \tilde{q} = \exp(2 \pi i \tilde{\tau}). \]

For us, the variable $p$
is related to the genus parameter $u$ of Gromov-Witten theory
and the Euler characteristic parameter $y$ of stable pairs theory:
\[ p = \exp(i u)\, , \quad \quad y = - p\, . \]
More precisely, we have $u = 2 \pi z$ and $y = \exp(2 \pi i (z + 1/2))$.

In the partition functions,
the variable $q$ indexes classes of $S$,
$$q^{h-1} \ \ \longleftrightarrow\ \   \text{a primitive  class $\beta_h$ on $S$
satifying} \ \ 2h-2=\langle \beta_h,\beta_h\rangle\ ,$$
and the variable $\tilde{q}$ indexes classes of $E$, 
$$\tilde{q}^{d-1} \ \ \longleftrightarrow\ \   \text{$d$ times the
class $[E]$}\ .$$

We will require several special functions.
Let
\[ C_{2k}(\tau) = - \frac{B_{2k}}{2k (2k)!} E_{2k}(\tau) \]
be renormalized Eisenstein series:
\[ C_2 = - \frac{1}{24} E_2\, , \quad C_4 = \frac{1}{2880} E_4\, , \quad \ldots
\, . \]
Define the Jacobi theta  function by
\begin{align*}
F(z,\tau) & = \frac{\vartheta_1(z,\tau)}{\eta^3(\tau)} \\
 & = -i(p^{1/2} - p^{-1/2}) \prod_{m \geq 1} \frac{ (1-pq^m) (1-p^{-1}q^m)}{ (1-q^m)^2 } \\
 & = u \exp \Big( - \sum_{k \geq 1} (-1)^k C_{2k} u^{2k} \Big),
\end{align*}
where we have choosen the normalization{\footnote{From 
the point of Gromov-Witten theory, the leading term $u^k$ for 
the special functions is more natural. However, 
the usual convention in the literature is to take leading term $(2\pi i z)^k$.
We follow the usual convention for most of the classical functions.
Our convention for $F$ is an exception which 
allows for fewer signs in the statement of the Gromov-Witten and pairs results,
but results in sign changes when comparing with classical function (see Conjecture A).}}
\begin{equation}\label{vvv}
 F = u + O(u^2), \quad u = 2 \pi z\, . 
\end{equation} 
Define the Weierstrass $\wp$ function by
\begin{align*}
 \wp(z,\tau) & = - \frac{1}{u^2} - \sum_{k \geq 2} (-1)^k (2k-1) 2k C_{2k} u^{2k-2} \\
& = \frac{1}{12} + \frac{p}{(1-p)^2} + \sum_{k,r \geq 1} k (p^k - 2 + p^{-k}) q^{kr} .
%= - \frac{1}{u^2} + \sum_{n \geq 1} (-1)^n (2n+1) (2n+2) C_{2n+2} u^{2n}
\end{align*}
$F(z,\tau)$ and $\wp(z,\tau)$ are related by the following construction. Let
\begin{equation}\label{zzz}
G = F \partial_z^2(F) - \partial_z(F)^2 = F^2 \partial_z^2 \log(F)\, , 
\end{equation}
where $\partial_z = \frac{1}{2 \pi i} \frac{\partial}{\partial z} = \frac{1}{i} \frac{\partial}{\partial u} = p \frac{d}{dp}$. 
%\[ g_K = K \partial_z^2(K) - \partial_z(K)^2 = K^2 \partial_z^2 \log(K) \]
%where $\partial_z = \frac{\partial}{\partial z} = \frac{1}{i} \frac{\partial}{\%partial u}$. 
Then we have the basic relation
\begin{align} \label{fttf}
\wp(z,\tau) & = - \partial_z^2 ( \log(F(z,\tau)) ) - 2 C_2(\tau) \\
& = - \frac{G}{F^2} + \frac{1}{12} E_2\, . \nonumber
\end{align}

Define the coefficients $c(m)$ by the expansion
\[ Z(z,\tau) = -24 \wp(z,\tau) F(z,\tau)^2 = \sum_{n \geq 0} \sum_{k \in \BZ} c(4n - k^2) p^k q^n. \]
The Igusa cusp form $\chi_{10}(\Omega)$ may be expressed by a result of
Gritsenko and Nikulin \cite{GN} as
\begin{equation} \chi_{10}(\Omega) = p q \tilde{q} \prod_{(k,h,d)} ( 1 - p^k q^h \tilde{q}^d )^{c(4 h d - k^2)}, \label{101} \end{equation}
where the product is over all  $k \in \BZ$ and
$h,d \geq 0$ satisfying one of the following two conditions:
\begin{enumerate}
\item[$\bullet$]
 $h>0$ or $d>0$ ,
\item[$\bullet$]
  $h = d = 0$ and $k < 0$ . 
\end{enumerate}
It follows from \eqref{101}, that the form $\chi_{10}$ is symmetric in the variables $q$ and $\tilde{q}$,
\begin{equation}\label{hbbh}
\chi_{10}(q,\tilde{q})= \chi_{10}(\tilde{q},q)\ .
\end{equation}

Let $\phi|_{k,m} V_l$ denote the action of the $l^{\text{th}}$ Hecke operator 
on a Jacobi form $\phi$ of index $m$ and weight $k$, see 
\cite[page 41]{EZ}.
The definition \eqref{101} is equivalent to
\begin{equation} \chi_{10}(\Omega) = - \tilde{q}\cdot F(z,\tau)^2 \Delta(\tau) \cdot \exp\Big( - \sum_{l \geq 1} \tilde{q}^l \cdot (Z|_{0,1}V_l)(z,\tau) \Big)\, , \label{102} \end{equation}
where
$$\Delta(\tau)= {q \prod_{n>0} (1-q^n)^{24}}\ .$$
Alternatively, we may define $\chi_{10}(\Omega)$ as the additive lift,
\[ \chi_{10}(\Omega) = - \sum_{\ell \geq 1} \tilde{q}^\ell \cdot \big( F^2 \Delta \big|_{10,1} V_{\ell} \big)(z,\tau) \,. \]

Our main interest is in the inverse of the Igusa cusp form,
\[ \frac{1}{\chi_{10}(\Omega)} \,. \]
By \eqref{vvv} and \eqref{102}, $\frac{1}{\chi_{10}}$ has a pole of order $2$ at $z = 0$ and its translates. Hence, the Fourier expansion of 
$\frac{1}{\chi_{10}}$ depends on the location in $\Omega$. We will always assume the parameters $(z,\tau)$ to be in the region
\[ 0 < |q| < |p| < 1. \]
The above choice determines the Fourier expansion of $\frac{1}{F^2 \Delta}$ and therefore also of $\frac{1}{\chi_{10}}$.

Consider the expansion in $\tilde{q}$,
$$ \frac{1}{\chi_{10}(\Omega)} = \sum_{n \geq -1} \tilde{q}^n \psi_n \, .$$
For the first few terms (see \cite[page 27]{KK3}), we have
\begin{align*}
 \psi_{-1} & = - \frac{1}{F^2 \Delta} \\
 \psi_{0}\, \, \,  & = 24 \frac{\wp}{\Delta} \\
 \psi_{1}\, \, \,  & = -\left(324 \wp^2 + \frac{3}{4} E_4\right) \frac{F^2}{\Delta} \\
 \psi_{2}\, \, \,  & = \left(3200 \wp^3 + \frac{64}{3} E_4 \wp + \frac{10}{27} E_6\right) \frac{F^4}{\Delta} \, .
 %\psi_{3}\, \, \,  & = \left(25650 \wp^4 + \frac{1329}{4} E_4 \wp^2 + \frac{45}{4} E_6 \wp + \frac{199}{384} E_4^2 \right) \frac{K^6}{\Delta}\, ,
\end{align*}
%where  $K = i F$.
In particular, the leading coefficient (with $p = -y$) is
\begin{align*}
\psi_{-1}
%& = \frac{1}{(p^{1/2} - p^{-1/2})^2}\, \frac{1}{q} \prod_{m \geq 1} \frac{1}{(1-p^{-1}q^m)^2 (1-q^m)^{20} (1 - p q^m)^2} \\
& = \frac{-1}{y + 2 + y^{-1}}\, \frac{1}{q} \prod_{m \geq 1} \frac{1}{(1 + y^{-1} q^m)^2 (1 - q^m)^{20} (1 + y q^m)^2}\ .
\end{align*}
It is related to
the Katz-Klemm-Vafa formula for $K3$ surfaces proven in \cite{MPT,PT2},
\begin{align*} 
-\psi_{-1}\ \Big|_{-y=\exp(-iu)} & = 
\sum_{\substack{h \geq 0 \\ g \geq 0}} u^{2g-2} q^{h-1} \int_{\M_g(S,\beta_h)} (-1)^g \lambda_g \\ 
& = \frac{1}{u^2 \Delta(\tau)} \exp\Big( \sum_{k \geq 1} u^{2k} \frac{|B_{2k}|}{k \cdot (2k)!} E_{2k}(\tau) \Big) \ .
\end{align*}
%In the change of variables, we have used
%\[ \frac{1}{(p^{1/2} - p^{-1/2})^2} = 
%\frac{1}{(\sqrt{-y} - \sqrt{-1/y})^2} = -  \frac{y}{(1+y)^2} \, . \]
The functions $\psi_d$ are meromorphic Jacobi forms with poles 
of order $2$ at $z = 0$ and its translates. The principal part of $\psi_d$ at $z = 0$ equals
\begin{equation} \frac{a(d)}{\Delta(\tau)} \frac{1}{(2 \pi i z)^2}\ 
\label{aaaa}  
\end{equation}
where $a(d)$ is the $q^d$ coefficient of $\frac{1}{\Delta}$.

% It is best to forget about $p$ and $y$ and just use
% \[ r = p^{1/2} - p^{-1/2} = \sqrt{-y} - \sqrt{-1/y} = e^{iu} - e^{-iu/2} = 2 i \sin(u/2). \]
% 
% Note also the inverse transformation between $r$ and $p$,
% 
% \[ p^{1/2} = \frac{r}{2} \pm \sqrt{ 1 + (r/2)^2 } = \frac{r}{2} \pm \Big( \sum_{k \geq 0} \frac{ (-1)^k (2k)! }{16^k k!^2 (1-2k) } r^{2k} \Big) \]
% 
% The coefficients for the bps numbers are given by $ir = -2 \sin(u/2)$, giving the equality
% \begin{multline*} \sum_{g \geq 0} R_{g,\beta} u^{2g-2} = \sum_{g \geq 0} r_{g,\beta} u^{2g-2} \Big( \frac{ \sin(u/2) }{u/2} \Big)^{2g - 2} \\
% = \sum_{g \geq 0} r_{g,\beta} (ir)^{2g-2}
% = \sum_{k \geq 0} y^n \int_{[P_k(S,\beta)]^{\text{red}}} 1
% \end{multline*}
% in the notation of [MPT].\\

\section{Hilbert schemes of points}

\subsection{Curves classes}
Let $S$ be a nonsingular projective $K3$ surface. Let 
$$S^{[d]}=\text{Hilb}^d(S)$$
denote the 
the Hilbert scheme of $d$ points of $S$. The Hilbert scheme
$S^{[d]}$ is a nonsingular projective variety of dimension
$2d$. Moreover, $S^{[d]}$ carries a holomorphic symplectic form,
see  \cite{Beau, Nak}.

We follow standard notation for the Nakajima operators \cite{Nak}.
For $\alpha \in H^{\ast}(S ; \BQ)$ and $i > 0$, let 
\[ \Fp_{-i}(\alpha) : H^{\ast}(S^{[d]},\BQ) \ra H^{\ast}(S^{[d+i]},\BQ),\ \ \ \gamma \mapsto \Fp_{-i}(\alpha) \gamma \]
be the Nakajima creation operator defined by adding length $i$
punctual subschemes incident to a cycle Poincare dual to $\alpha$.  
The cohomology of $S^{[d]}$ can be completely described  
by the cohomology of $S$ via the action of the operators $\Fp_{-i}(\alpha)$ on the vacuum vector 
\[ 1_S \in H^{\ast}(S^{[0]},\BQ) = \BQ. \]

Let $\pt$ be the class of a point on $S$. 
For $\beta \in H_2(S,\BZ)$, define the class
\[ C(\beta) = \Fp_{-1}(\beta) \Fp_{-1}(\pt)^{d-1} 1_S \in H_2(S^{[d]},\BZ). \]
If $\beta = [C]$ for a curve $C \subset S$, then $C(\beta)$ is the class of the curve given by fixing $d-1$
distinct points away from $C$ and letting a single point move on $C$. For $d \geq 2$, let
\[ A = \Fp_{-2}(\pt) \Fp_{-1}(\pt)^{d-2} 1_S \]
be the class of an exceptional curve -- the locus of spinning
double points centered at a point $s \in S$ plus $d-2$ fixed points away from 
$s$.
For $d \geq 2$, 
$$ H_2(S^{[d]},\BZ) =
\big\{ \ C(\beta) + kA \ \big| \  
\beta \in H_2(S,\BZ), \  k \in \BZ\ \big\} \ .$$

The moduli space of stable maps{\footnote{Here, the maps are required
to have {\em connected} domains. No superscript $\bullet$ appears
in the notation.}}  
$\Mbar_{0,2}(S^{[d]}, C(\beta) + kA)$
carries a reduced virtual class of 
dimension $2d$.

\subsection{Elliptic fibration}
\label{hvvc}
Let $S$ be an elliptic $K3$ surface 
$$\pi : S \ra \p^1$$ 
with a section, and let $F \in H_2(S,\BZ)$ be the class of a fiber.  
The generic fiber of the induced fibration
\[ \pi^{[d]} : \Hilb^d(S) \ra \Hilb^d(\p^1) = \p^d, \]
is a nonsingular Lagrangian torus.  
Let 
$$L_z  \subset \Hilb^d(S)$$ 
denote the the fiber of $\pi^{[d]}$ over $z\in \p^d$.

Let $\beta_h$ be a primitive curve class on $S$ with $\langle \beta_h,F 
\rangle = 1$ and square 
$$\langle \beta_h, \beta_h \rangle=2h-2\, .$$
%$$\beta_h^2 = 2h - 2\, .$$
 For $z_1, z_2 \in \p^d$, define the invariant
\begin{eqnarray*}
  \mathsf{N}^{\Hilb}_{k,h,d} &=& \blangle  L_{z_1}, L_{z_2} \brangle_{\beta_h,k}^{\Hilb^d(S)} \\
&= & 
\int_{[\overline{M}_{0,2}(S^{[d]}, C(\beta_h) + kA)]^{\text{red}}} \ev_1^{\ast}(L_{z_1}) 
\cup  \ev_2^{\ast}(L_{z_2})
\end{eqnarray*}
which (virtually) counts the number of rational curves incident to the Lagrangians $L_{z_1}$ and $L_{z_2}$.

A central result of \cite{GO} is the following complete evaluation of 
$\mathsf{N}^{\Hilb}_{k,h,d}$. 
\begin{thm} \label{MThm0} 
For $d>0$, we have
\begin{equation*}
\sum_{k \in \BZ} \sum_{h\geq 0}
% \sum_{\substack{k\in \BZ\\h \geq 0}}
%_{k \geq 0, h \geq 0}  
\mathsf{N}^{\Hilb}_{k,h,d}\, y^k q^{h-1}  = \frac{F(z,\tau)^{2d-2}}{\Delta(\tau)}
\end{equation*}
where $y=-e^{2\pi i z}$ and $q=e^{2\pi i\tau}$. 
\end{thm}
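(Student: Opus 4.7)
The plan is to combine deformation invariance, (quasi-)Jacobi form structure, and explicit low-degree matching to reduce the identity to a finite check.

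First, by deformation invariance of reduced Gromov-Witten invariants along polarized $K3$ families, I would work throughout on an elliptic $K3$ surface $\pi:S \to \p^1$ with a section $B$ and take $\beta_h = B + hF$ as the primitive representative. This ensures $\langle \beta_h, F\rangle = 1$ and makes the Lagrangian fibration $\pi^{[d]}:S^{[d]} \to \p^d$ available, with fiber $L_z$ (for generic $z$) a product of smooth elliptic fibers of $\pi$. In this specialization the moduli of stable maps acquires extra torus-equivariant structure via the elliptic fibers, and the relevant curve classes $C(\beta_h)+kA$ have a transparent decomposition into a ``horizontal'' part (the section-like direction contributing to $\beta_h$) and a ``vertical'' part (along fibers and exceptional $A$-curves).

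Second, I would establish that the left-hand generating series is a weak Jacobi form in $(z,\tau)$ of weight $-10 - 2d$ and index $d - 1$, matching those of $F^{2d-2}/\Delta$. Modularity in $\tau$ would follow from the Noether-Lefschetz / Kudla-Millson machinery of \cite{MPT}, transported to $H^\ast(S^{[d]})$ through the Beauville-Bogomolov form on $H^2(S^{[d]},\BZ)$. The elliptic transformation law in the $z$-variable, which tracks the exceptional class $A$, would come from a monodromy argument on the moduli space of pairs $(S,\beta_h)$ combined with Fourier-Mukai-type twists by line bundles supported on the exceptional divisor of the Hilbert-Chow morphism $S^{[d]} \to \Sym^d(S)$; such twists shift $k$ in a controlled, periodic way. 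Together, these transformation properties confine the generating series to a finite-dimensional vector space of Jacobi forms.

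Third, I would pin down the answer by matching a small number of coefficients. The base case $d=1$ collapses to $S^{[1]}=S$ with no exceptional class ($A=0$), and reduces to a standard KKV-type evaluation on $S$ giving $1/\Delta(\tau)$. For $d\geq 2$, I would compute the extremal coefficient ($h=0$, where $\beta_0 = B$ is a rigid section class) by a direct virtual-localization analysis on $\overline{M}_{0,2}(S^{[d]}, C(B) + kA)$, and the principal part at $z=0$ (whose shape is dictated by the $1/(2\pi i z)^2$ behavior recorded in \eqref{aaaa}) by specializing $L_{z_1} \to L_{z_2}$ and isolating the contribution of rational curves that factor through the diagonal Lagrangian fiber, computed via intersection theory on the compactified Jacobian of the elliptic fibration. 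A handful of such matches uniquely identifies the series as $F^{2d-2}/\Delta$.

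The main obstacle is the second step, and specifically the elliptic transformation law in the variable $y = -e^{2\pi i z}$ tracking the exceptional class $A$. Unlike on $S$ itself, $S^{[d]}$ does not admit a clean degeneration adapted to reduced curve-counting, so there is no convenient degeneration formula to exploit; the modular and elliptic structures must instead be extracted from deformation theory and monodromy directly, and understanding how the reduced obstruction theory transforms under the relevant line-bundle twists is the heart of the argument. Once this structural statement is in hand, the rest of the proof is essentially bookkeeping against the known basis of Jacobi forms of weight $-10-2d$ and index $d-1$.
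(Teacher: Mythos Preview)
The paper does not prove Theorem~\ref{MThm0} at all: it is quoted as ``a central result of \cite{GO}'' and imported wholesale. So there is no proof here to compare against; the actual argument lives in \cite{GO}.

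That said, your proposal has a genuine gap independent of this. Your entire strategy hinges on Step~2: proving \emph{a priori} that the generating series is a weak Jacobi form of weight $-10-2d$ and index $d-1$, after which matching finitely many coefficients would finish. But you yourself flag the elliptic transformation law in $z$ as ``the main obstacle,'' and the mechanisms you invoke for it --- monodromy on the moduli of $(S,\beta_h)$ combined with ``Fourier--Mukai-type twists by line bundles supported on the exceptional divisor'' --- are not an argument, they are a wish list. There is no known general theorem that produces the elliptic transformation law for reduced genus-zero invariants of $S^{[d]}$ in the exceptional-curve variable; the Noether--Lefschetz/Kudla--Millson machinery of \cite{MPT} gives modularity in $\tau$ for invariants of the \emph{surface}, and transporting it to $H^\ast(S^{[d]})$ with control of the $A$-direction is precisely the missing content. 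In effect your plan assumes the deepest structural fact and then does bookkeeping.

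The proof in \cite{GO} does not proceed this way. It works directly with the geometry of the Lagrangian fibration $\pi^{[d]}:S^{[d]}\to\p^d$ and the section/fiber decomposition on the elliptic $K3$, computing the two-point invariant $\langle L_{z_1},L_{z_2}\rangle$ by an explicit analysis of the relevant moduli of stable maps (using the WDVV relations and the operator formalism that the present paper later axiomatizes in Section~5). The quasi-Jacobi structure is an \emph{output} of that computation, not an input. Your base case $d=1$ and the $h=0$ localization are fine warm-ups, but they do not substitute for the core calculation.
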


In the $d=1$ case, the class $A$ vanishes on $S^{[1]}=S$.
By convention, only the $k=0$ term in the sum on the left is taken. 
Then, Theorem \ref{MThm0} specializes in $d=1$ to
the Yau-Zaslow formula \cite{YZ} for rational curve counts in
primitive classes of $K3$ surfaces.

If we specialize the formula of Theorem \ref{MThm0} to $d=0$, we obtain
\begin{equation*}
\sum_{k \in \BZ} \sum_{h\geq 0}\mathsf{N}^{\Hilb}_{k,h,0}\, y^kq^{h-1}  = \frac{F(z,\tau)^{-2}}{\Delta(\tau)} = \frac{1}{F(z,\tau)^2 \Delta}\ .
\end{equation*}
The result is exactly the Katz-Klemm-Vafa formula as discussed in
Section 2. While the $d=0$ specialization is not
geometrically well-defined from the point of view of the
Hilbert scheme, the result strongly suggests a correspondence
between the Gromov-Witten theory of $K3$ fibrations and
the reduced theory of $\Hilb^d(S)$. Precise conjectures
will be formulated in the next Section.

\section{Conjectures}

\subsection{Primitive case}
Let $\beta_h\in \text{Pic}(S) \subset H_2(S,\mathbb{Z})$ be a {\em primitive}
class which 
is positive (with respect to any ample polarization) and
satsifies
$$\langle \beta_h, \beta_h \rangle = 2h-2\, .$$
Let $(E,0)$ be a nonsingular elliptic curve with origin $0\in E$.
For $d>0$, consider  the reduced Gromov-Witten invariant
\begin{equation} \label{wssw}
 \mathsf{H}_d(y,q) = 
%\sum_{\substack{h \geq 0 \\ k \in \BZ}} 
\sum_{k \in \BZ} \sum_{h\geq 0}y^k q^{h-1} 
\int_{[ \overline{M}_{(E,0)}(S^{[d]}, C(\beta_h) + kA) ]^{\text{red}}}  \text{ev}_0^*(\beta_{h,k}^\vee)\, .
\end{equation}
The moduli space \eqref{wssw} is of stable maps with 1-pointed domains
with complex
structure {\em fixed} after stabilization
to be $(E,0)$.
The reduced virtual dimension of 
$\overline{M}_{(E,0)}(S^{[d]}, C(\beta_h) + kA)$ is 1.
The divisor class $\beta_{h,k}^\vee \in H^2(S^{[d]},\mathbb{Q})$
is chosen to satisfy
\begin{equation}\label{ffqq}
\int_{C(\beta_h) + kA} \beta_{h,k}^\vee  = 1 \, . 
\end{equation}
The integral \eqref{wssw} is well-defined.

Following the perspective of \cite{BP,OPH,OPLC},
a connection between the disconnected Gromov-Witten 
invariants $\mathsf{N}^\bullet_{g,\beta_h,d}$ of $K3\times E$ and the series \eqref{wssw}
obtained from the geometry of $S^{[d]}$ is natural to expect.

We may rewrite $\mathsf{H}_d(y,q)$ by degenerating $(E,0)$ to the
nodal elliptic curve (and using the divisor equation) as
\begin{equation} \label{wsswss}
 \mathsf{H}_d(y,q) = 
%\sum_{\substack{h \geq 0 \\ k \in \BZ}} 
\sum_{k \in \BZ} \sum_{h\geq 0}
y^k q^{h-1} 
\int_{[ \overline{M}_{0,2}(S^{[d]}, C(\beta_h) + kA) ]^{\text{red}}}  
(\ev_1 \times \ev_2)^{\ast} [\Delta^{[d]}]\, ,
\end{equation}
where $[\Delta^{[d]}] \in H^{2d}(S^{[d]} \times S^{[d]},\mathbb{Q})$ is the diagonal class.
Equation \eqref{wsswss} shows the integral \eqref{wssw} is
independent of the choice of $\beta_{h,k}^\vee$ satisfying
\eqref{ffqq}. By convention,
\begin{eqnarray*} \label{wsswssw}
 \mathsf{H}_1(q) & = &  
 \sum_{h\geq 0}
 q^{h-1} 
\int_{[ \overline{M}_{0,2}(S^{[1]}, C(\beta_h)) ]^{\text{red}}}  
(\ev_1 \times \ev_2)^{\ast} [\Delta^{[1]}] \\ \nonumber
& = & 2 q \frac{d}{dq}\Delta^{-1}\,\\  \nonumber 
& = &\, -2 \frac{E_2}{\Delta}\ .
\end{eqnarray*}
For the second equality, we have used the Yau-Zaslow
formula.

We define a generating series over all $d>0$ of the Hilbert
scheme geometry:
\[ \mathsf{H}(y, q, \tilde{q}) = 
\sum_{d > 0} \mathsf{H}_d(y,q)\, \tilde{q}^{d-1}. \]
The analogous generating series over all $d$ for the 3-fold geometry
$$X=S\times E$$
is defined by
\begin{equation}\label{fppf}
 \mathsf{N}^{X\bullet}(u, q, \tilde{q}) = 
\sum_{g \in \mathbb{Z}}\sum_{h\geq 0} \sum_{d\geq 0}
 \mathsf{N}_{g,\beta_h,d}^{X\bullet} \ u^{2g-2} q^{h-1}\tilde{q}^{d-1}. 
\end{equation}
The main conjecture in the primitive case is the following.

\vspace{8pt}
\noindent{\bf Conjecture A.}
{\em Under $y = -\exp(iu)$,
$$\mathsf{N}^{X\bullet}(u,q,\tilde{q}) \ = \ 
 \mathsf{H}(y, q, \tilde{q})  + \frac{1}{F^2\Delta} \cdot
%\sum_{d \geq 0} \tilde{q}^{d-1} a(d-1) G^d
%a(d) = \chi(S^{[d]})
\frac{1}{\tilde{q}} \prod_{n \geq 1} \frac{1}{(1 - (\tilde{q} \cdot G)^n)^{24}} 
\ = \ - \frac{1}{\chi_{10}(\Omega)}\ .$$}
\vspace{8pt}

The Igusa cusp form $\chi_{10}(\Omega)$ and the
functions $F(z,\tau)$, $\Delta(\tau)$, and $G(z,\tau)$ 
are as defined in Section 2.

The second factor
in the correction term added to $\mathsf{H}(y, q, \tilde{q})$ can be expanded as
\begin{align*} \frac{1}{\widetilde{q}} \prod_{n \geq 1} \frac{1}{(1 - (\tilde{q} \cdot G)^n)^{24}} & = G \cdot \frac{1}{\Delta(\tilde{\tau})}\Big|_{\tilde{q} = G \cdot \tilde{q}} \\
 & = \tilde{q}^{-1} + 24 G + 324 G^2 \tilde{q} + 3200 G^3 \tilde{q}^2 + 
%25650 \tilde{q}^3 G^4 + 
\cdots\, .
\end{align*}
From definition \eqref{zzz} of $G$ and property \eqref{vvv},
 $$G = 1 + O(q)\, .$$ 
Hence the full correction term has $\tilde{q}^{-1}$ coefficient
$\frac{1}{F^2\Delta}$
which is the Katz-Klemm-Vafa formula (required since
$\mathsf{H}(y, q, \tilde{q})$ has no $\tilde{q}^{-1}$ term).
The $\tilde{q}^0$ term yields the identity
$$-2 \frac{E_2}{\Delta} + 24\frac{G}{F^2\Delta}  = 
-24 \frac{\wp}{\Delta} \ $$
which is equivalent to \eqref{fttf}.

We do not at present have a geometric explanation for the
full correction term
\begin{equation} \label{cxxc} \frac{1}{F^2\Delta} \cdot 
\frac{1}{\tilde{q}} 
\prod_{n \geq 1} \frac{1}{(1 - (\tilde{q} \cdot G)^n)^{24}} \ .
\end{equation}
Denote the $\tilde{q}^d$ coefficient of \eqref{cxxc} by 
\[ \phi_d = \frac{a(d)}{\Delta(\tau)} \cdot \frac{G^{d+1}}{F^2}\ . \]
Here, $a(d)$ is the $q^d$ coefficient of $\frac{1}{\Delta}$. 
Then $\phi_d$ is a meromorphic Jacobi form with poles of order 2
 at $z = 0$ and its translates. The principal part of $\phi_d$ at $z=0$ 
equals
\[ \frac{a(d)}{\Delta(\tau)} \frac{1}{(2 \pi z)^2}. \]
Comparing with \eqref{aaaa}, we see $\phi_d$ accounts for 
all the poles in $-\psi_d$. The second equality in Conjecture A 
therefore determines a \emph{natural} splitting
\begin{equation} -\psi_d = \mathsf{H}_d + \phi_d \label{123} \end{equation}
of $-\psi_d$ into a finite (holomorphic) quasi-Jacobi form $\mathsf{H}_d$ 
and a polar part $\phi_d$. 
In particular, the Fourier expansion of $\mathsf{H}_d$ 
is independent of the moduli $\tau$. Hence, 
all  wall-crossings are related to $\phi_d$. 
The splitting of $\psi_d$ into a finite and polar part has been studied by Dabholkar, Murthy, and Zagier \cite{DMZ} and 
has a direct interpretation in a physical model of quantum black holes.
In fact, up to the $E_2$ summand in \eqref{fttf}
our splitting matches their simplest choice, see \cite[Equations 1.5 and 9.1]{DMZ}.
%In fact, our splitting \eqref{123} is
%the simplest choice\footnote{Up to an $E_2$ summand.}, see \cite[Equations 1.5 and 9.1]{DMZ}.

The two equalities of Conjecture A are independent claims.
The first is a correspondence result (up to correction).
We have made verifications by partially evaluating both sides.
The second equality, which evalutes the series,
has already been seen to hold for the coefficients of $\tilde{q}^{-1}$ and $\tilde{q}^0$.
The second equality has been proven for the coefficient of $\tilde{q}^{1}$  
in \cite{GO}.
The conjecture
\begin{equation}\label{zxxz}
\mathsf{N}^{X\bullet}(u,q,\tilde{q}) \ = 
% \mathsf{H}(y, q, \tilde{q})  + \frac{1}{F^2\Delta} \cdot 
%\frac{1}{\tilde{q}} 
%\prod_{n \geq 1} \frac{1}{(1 - (\tilde{q} \cdot G)^n)^{24}} 
\  - \frac{1}{\chi_{10}(\Omega)}\ 
\end{equation}
is directly related to the predictions of Section 6.2 of
\cite{KKV}. J. Bryan \cite{jb}  has verified{\footnote{Bryan's calculation is
on the sheaf theory side, see Conjecture D below.}}
conjecture \eqref{zxxz} for the coefficients
$q^{-1}$ and $q^{0}$.

The conjectural equality  \eqref{zxxz} may be viewed as a
mathematically precise formulation of 
\cite[Section 6.2]{KKV}. The Igusa cusp form $\chi_{10}$ appears 
in \cite{KKV} via the elliptic genera of the symmetric products
of a $K3$ surface (the $\chi_{10}$ terminology is not used in \cite{KKV}). The 
development of the reduced virtual class
occurred in the years following \cite{KKV}.
Since the $K3 \times E$ geometry carries a free $E$-action,
a further step (beyond reduction) must be taken to avoid
a trivial theory. Definition \eqref{fccf} with an insertion
is a straightforward solution. Finally, the Igusa cusp form
$\chi_{10}$ is related to the {\em disconnected} reduced Gromov-Witten
theory of $K3 \times E$. With these foundations,
the prediction of
\cite{KKV} may be interpreted to exactly match \eqref{zxxz}.

By the symmetry \eqref{hbbh} of the Igusa cusp form $\chi_{10}$,
Conjecture A predicts a surprising symmetry for
disconnected Gromov-Witten theory of $X$,
$$\mathsf{N}^{X\bullet}_{g,\beta_h,d} = \mathsf{N}^{X\bullet}_{g,\beta_d,h}\, ,$$
for all primitive classes $\beta_h$ and $\beta_d$. In the notation
\eqref{vvttt}, the symmetry can be written as 
$$\mathsf{N}^{X\bullet}_{g,1,h,d} = \mathsf{N}^{X\bullet}_{g,1,d,h}\, $$
for all $h,d\geq0$
(where the subscript $1$ denotes primitivity).

Conjectures for the motivic generalization of the $d=0$ case are
presented in \cite{KKP}. An interesting connection to the
Mathieu $\mathsf{M}_{24}$ moonshine phenomena appears there in the data.
Since the Gromov-Witten theory of $X$ is related via 
$-\frac{1}{\chi_{10}}$ by Conjecture A to 
the elliptic
genera of the symmetric products of $K3$ surfaces,
the Mathieu $\mathsf{M}_{24}$ moonshine must also arise here.

\subsection{Imprimitive classes}
The generating series $\mathsf{N}^{X\bullet}(u, q, \tilde{q})$
defined by \eqref{fppf} concerns only the primitive classes 
$\beta_h\in \text{Pic}(S)$.
To study the imprimitive case, we define
\begin{equation}\label{ttllq}
\mathsf{N}^{X}_{\beta}(u, \tilde{q}) = 
\sum_{g \in \mathbb{Z}} \sum_{d\geq 0}
 \mathsf{N}^X_{g,\beta,d} \ u^{2g-2} \tilde{q}^{d-1} 
\end{equation}
for any $\beta\in \text{Pic}(S)$. The
coefficents of $ \mathsf{N}^{X}_{\beta}(u, \tilde{q})$ are
{\em connected} invariants.{\footnote{By Proposition 1, there
is no difficulty in moving back and forth between connected
and disconnected invariants.}}
We may write \eqref{ttllq} in the notation \eqref{vvttt} as
\begin{equation*}
\mathsf{N}^{X}_{m\beta_h}(u, \tilde{q}) = 
\sum_{g \geq 0} \sum_{d\geq 0}
 \mathsf{N}^X_{g,m,m^2(h-1)+1,d} \ u^{2g-2} \tilde{q}^{d-1} 
\end{equation*}
for primitive $\beta_h\in \text{Pic}(S)$ satisfying
$$\langle \beta_h, \beta_h \rangle = 2h-2\, .$$
In the primitive ($m$=1) case, instead of writing $\mathsf{N}^{X}_{\beta_h}$,
we write
\begin{equation*}
\mathsf{N}^{X}_{h}(u, \tilde{q}) = 
\sum_{g \geq 0} \sum_{d\geq 0}
 \mathsf{N}^X_{g,1,h,d} \ u^{2g-2} \tilde{q}^{d-1} 
\end{equation*}
\vspace{10pt}
\noindent{\bf{Conjecture B.}}
{\em For all $m>0$,
\begin{equation}\label{xxrr}
\mathsf{N}^{X}_{m\beta_h}(u, \tilde{q}) = \sum_{k|m}
\frac{1}{k}\mathsf{N}^{X}_{{(\frac{m}{k})^2(h-1)+1} }
(ku, \tilde{q})\, ,
\end{equation}
for the primitive class $\beta_h$.}
\vspace{10pt}

Conjecture B expresses the series $\mathsf{N}^{X}_{m\beta_h}$
in terms of series for {\em primitive classes} corresponding
to the divisors $k$ of $m$.
To such a divisor $k$, we associate the class $\frac{m}{k}\beta_h$ with square
$$\left\langle \frac{m}{k} \beta_h, \frac{m}{k} \beta_h \right\rangle = 
\left(\frac{m}{k}\right)^2(2h-2) = 2\left({{\left(\frac{m}{k}\right)^2(h-1)+1} }\right)-2\ .$$
The term in the sum on the right side of \eqref{xxrr} corresponding
to $k$ may be viewed as the contribution of the primitive class
of square equal to $\langle \frac{m}{k} \beta_h, \frac{m}{k} \beta_h \rangle$.
The primitive contribution of the divisor $k$ 
to $\mathsf{N}^X_{g,m,m^2(h-1)+1,d}$
is 
\begin{equation}
\label{gvvp}
k^{2g-3} \cdot \mathsf{N}^X_{g,1,\left(\frac{m}{k}\right)^2(h-1)+1,d}\ .
\end{equation}

The scaling factor $k^{2g-3}$
is {\em independent} of $d$. In fact, the variable $\tilde{q}$
 plays no role in formula \eqref{xxrr}. To emphasize the point,
the contribution of the divisor $k$ geometrically is a
contribution of the class 
$$\left(\frac{m}{k}\beta_h,d\right) = \iota_{S*}\left(\frac{m}{k}\beta\right)+
\iota_{E*}(d[E])$$ to
$(m\beta_h,d)$ in the 3-fold $S\times E$.
Unless $d=0$, such a contribution can not be viewed as
a multiple cover contribution in the usual Gopakumar-Vafa
perspective of Calabi-Yau 3-fold invariants.

In the $d=0$ case, Conjecture B specializes to the multiple
cover structure of the KKV conjecture proven in \cite{PT2}
which is usually formulated in terms of BPS counts.
We could rewrite Conjecture B in terms of nonstandard 3-fold BPS counts
which do not interact with the curve class $[E]$ associated to 
$\tilde{q}$. Instead, we have written Conjecture B in the most
straightforward Gromov-Witten form. In fact, the simple
form of Conjecture B suggests a much more general underlying structure
for $K3$ surfaces (which we will discuss in Section \ref{dd44}).

Further evidence for Conjecture B can be found in case $h=0$.
Localization arguments{\footnote{The localization
required here is parallel to the proof of the
scaling in \cite[Theorem 3]{FP}.}}
 (with respect
to the $\mathbb{C}^*$ acting on the $-2$ curve) 
yield 
\begin{equation*}
\mathsf{N}^{X}_{m\beta_0}(u, \tilde{q}) = 
\frac{1}{m}\mathsf{N}^{X}_{0}
(mu, \tilde{q})\, .
\end{equation*}
Hence, Conjecture B predicts the primitive contributions
corresponding to $k\neq m$ all {\em vanish} in the $h=0$ case.
Such vanishing is correct: the reduced Gromov-Witten invariants of $X$ vanish
for classes $(\beta,d)$ where $\beta$ is primitive and 
$$\langle \beta,\beta \rangle <-2\ .$$

Finally, an elementary analysis leads to the proof of Conjecture B
in all cases for $g=1$. Both sides of \eqref{xxrr} are easily
calculated.

\subsection{Descendent theory for $K3$ surfaces} \label{dd44}
Let $S$ be a nonsingular projective $K3$ surface, and let  
$\beta\in \text{Pic}(S)$ be a positive class.
We define the (reduced) descendent Gromov-Witten invariants by
$$\left\langle \prod_{i=1}^n \tau_{\alpha_i}(\gamma_i) \right\rangle_{g,\beta}^S=
\int_{[\overline{M}_{g,n}(S,\beta)]^{red}} \prod_{i=1}^n \psi_i^{\alpha_i}
\cup \text{ev}_i^\ast(\gamma_i)\ , \ \ \ \gamma_i\in H^*(S,\mathbb{Q})\ .$$
A potential function for the
 descendent theory of $K3$ surfaces in primitive classes is  defined
by
\begin{equation}
\label{vppqq}
\mathsf{F}_{g}\big(\tau_{k_1}(\gamma_{l_1}) \cdots
\tau_{k_r}(\gamma_{l_r})\big)=
\sum_{h=0}^\infty 
\Big\langle \tau_{k_1}(\gamma_{l_1}) \cdots
\tau_{k_r}(\gamma_{l_r})\Big\rangle^{S}_{g,\beta_h} 
\ q^{h-1}
\end{equation}
for $g\geq 0$.

The 
descendent potential \eqref{vppqq} is 
a quasimodular form \cite{MPT}. 
The ring 
$$\QMod = \QQ[E_{2}(q), E_4(q), E_6(q)]$$
of holomorphic quasimodular forms (of level $1$) is the 
$\QQ$-algebra generated by Eisenstein series $E_{2k}$, see \cite{bghz}.  
The ring $\QMod$
is naturally graded by weight (where $E_{2k}$ has weight $2k$) 
and inherits an increasing filtration 
$$\QMod_{\leq 2k}\subset \QMod$$
given by forms of weight $\leq 2k$.
The precise
result proven in \cite{MPT} is the following.

\begin{thm} \label{qqq} The descendent potential
%$\mathsf{F}_{g}\big(\tau_{k_1}(\gamma_{1})
%\cdots
%\tau_{k_r}(\gamma_{r})\big)
%$
is the Fourier expansion in $q$
of a quasimodular form 
$$\mathsf{F}_{g}(\tau_{k_{1}}(\gamma_{{1}})\cdots\tau_{k_{r}}
(\gamma_{{r}}))
\in \frac{1}{\Delta(q)}\,\QMod_{\leq 2g+2r}\,$$
with pole at $q=0$ of
order at most $1$.
\end{thm}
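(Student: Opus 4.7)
My strategy is to reduce the modularity statement to computations on an explicit elliptic $K3$ surface, and then to invoke the known quasi-modularity of Gromov-Witten theory of elliptic fibrations. By deformation invariance of the reduced theory, together with the fact that $\mathsf{N}_{g,\beta,d}^X$ depends on $\beta$ only through the square and divisibility (and the analogous statement for descendent invariants), it suffices to work on one convenient $S$ on which all primitive classes of square $2h-2$ are algebraic. I would take $S \to \mathbb{P}^1$ to be an elliptic $K3$ with a section $\sigma$ and fiber class $[F]$, and set $\beta_h = \sigma + h[F]$; this is primitive of square $2h-2$ for every $h\geq 0$, and the variable $q^{h-1}$ then tracks the fiber degree of curves with fixed base degree $1$.

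Next I would apply the reduced degeneration formula along a degeneration of the elliptic fibration where the base $\mathbb{P}^1$ breaks into a nodal chain. This produces a degeneration of $S$ into a chain of rational elliptic surfaces glued along smooth elliptic curves. The reduced virtual class concentrates on one component, while the rest contribute via relative Gromov-Witten integrals on rational elliptic surfaces together with rubber integrals over $(\text{elliptic curve}) \times R$-type geometries. Each descendent $\tau_{k_i}(\gamma_i)$ distributes across the pieces via the K\"unneth decomposition of the diagonal and the standard splitting of $\psi$-classes.

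I would then identify each building block as a quasi-modular form. The relative invariants of the rational elliptic surface, together with the associated rubber, assemble (via comparison with the Gromov-Witten theory of an elliptic curve) into series in $q$ which are quasi-modular by Bloch-Okounkov and Okounkov-Pandharipande. The $(-1)^g\lambda_g$-type factor hidden in the reduced class contributes a Hodge integral of weight at most $2g$; each descendent $\tau_{k_i}(\gamma_i)$ should contribute a factor of weight at most $2k_i+2$. Summing over $h$ collects the fiber-degree contributions into theta-like combinations, yielding an element of $\frac{1}{\Delta(q)}\QMod$. The simple pole at $q=0$ and the $\frac{1}{\Delta}$ denominator emerge from the $24$ rational nodal fibers of the elliptic $K3$, consistent with the Katz-Klemm-Vafa specialization \eqref{kkv} in the non-descendent case.

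The main obstacle is the weight bookkeeping. One must show that every building block saturates but never exceeds its expected quasi-modular weight, so that the full integral lies in $\QMod_{\leq 2g+2r}/\Delta$. This requires matching, term by term in the degeneration formula, the degree increments produced by the $\psi^{k_i}$ classes and the K\"unneth pieces of $\gamma_i$ against the weight filtration of $\QMod$, and controlling how relative conditions along the gluing elliptic curves enter the elliptic-curve GW generating series. A secondary subtlety is verifying that no higher-order pole at $q=0$ can appear from components absorbing many of the $24$ nodal fibers at once; this is where the reduction to the elliptic $K3$ model pays off, since the section constrains precisely one such contribution.
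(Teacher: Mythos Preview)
The paper does not prove this theorem at all: it is stated with the attribution ``The precise result proven in \cite{MPT} is the following,'' so the proof you should be comparing against is that of Maulik--Pandharipande--Thomas. Your outline is in fact very close to the MPT argument: one reduces by deformation invariance to an elliptic $K3$ with section, takes $\beta_h=B+hF$, degenerates $S$ into rational elliptic surfaces glued along a smooth elliptic fiber, trades the reduced class for standard relative virtual classes on the pieces, and then reduces the relative theory of the rational elliptic surface to the Gromov--Witten theory of an elliptic curve, where quasi-modularity is the Bloch--Okounkov/Okounkov--Pandharipande theorem. So at the level of strategy you have identified the correct proof.

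Two points deserve correction. First, the degeneration used in \cite{MPT} splits $S$ into exactly two rational elliptic surfaces $R\cup_E R$, not a longer chain; the reduced class is then expressed via standard virtual classes on $R/E$ (this is the same degeneration invoked in the proof of Proposition~5 here). Second, and more substantively, your weight accounting does not yield the stated bound. You propose that each $\tau_{k_i}(\gamma_i)$ contributes weight at most $2k_i+2$, which would give only $\QMod_{\leq 2g+2r+2\sum k_i}$; even after imposing the dimension constraint $\sum(k_i+\delta_i)=g+r$ this is weaker than $2g+2r$. In \cite{MPT} the sharp bound is obtained not by naive bookkeeping but by showing that, after the degeneration, every contribution is a polynomial in derivatives of Eisenstein series whose total weight is controlled by the genus and the number of relative/marked points rather than by the individual descendent levels --- the $\psi$-powers are absorbed into the elliptic-curve descendent theory, where the quasi-modular weight depends only on the number of insertions. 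You correctly flag this as the main obstacle, but the mechanism you sketch would not close the gap.
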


%In the primitive case controlled by Theorem \ref{qqq}, we use the special
%notation
%$$\left\langle \prod_{i=1}^r \tau_{\alpha_i}(\gamma_i) 
%\right\rangle_{g,\beta_h}^S= 
%\left\langle \prod_{i=1}^r \tau_{\alpha_i}(\gamma_i) 
%\right\rangle_{g,h}^{K3}\, ,$$
%whenever all $\gamma_1, \dots, \gamma_r$ are point or unit insertions.
%Let $\delta_i$ be the (complex) codimension of $\gamma_i$,
%$$\gamma_i \in H^{2\delta_i}(S,\mathbb{Q})\ .$$

\noindent Conjectures C1 and C2 below will reduce all descendent invariants to
the primitive case.

Conjecture C1  is an invariance property. Let $S$ and $\widetilde{S}$
be two $K3$ surfaces, and
let
$$\varphi:\Big(H^2(S,\mathbb{R})\, ,\langle,\rangle\Big) 
\rightarrow \Big(H^2(\widetilde{S}\, ,\mathbb{R}),\langle,\rangle \Big)$$
be a {\em real isometry} sending
a effective curve{\footnote{Since there is a canonical isomorphism
$H_2(S,\mathbb{Z})\stackrel{\sim}{=}H^2(S,\mathbb{Z})$ ,
we may consider $\beta$ also as a cohomology class.}} class $\beta\in H^2(S,\mathbb{Z})$ to an
effective curve class $\widetilde{\beta}\in H^2(\widetilde{S},\mathbb{Z})$,
$$\varphi(\beta)= \widetilde{\beta}\ .$$
It is convenient to extend $\varphi$ to all of $H^*(S,\mathbb{R})$ by
$$\varphi(\mathsf{1})=\mathsf{1}\, , \ \ \ \ \varphi(\mathsf{p})=\mathsf{p}\, $$
where $\mathsf{1}$ and $\mathsf{p}$ are the identity and point
classes respectively.

\vspace{10pt}
\noindent{\bf{Conjecture C1.}}
If $\beta\in H^2(S,\mathbb{Z})$ and $\widetilde{\beta}\in H^2(S,\mathbb{Z})$
have the same divisibility, 
\begin{equation*}
\left\langle \prod_{i=1}^r \tau_{\alpha_i}(\gamma_i) \right\rangle_{g,\beta}^S
= 
\left\langle \prod_{i=1}^n \tau_{\alpha_i}(\varphi(\gamma_i)) 
\right\rangle_{g,\widetilde{\beta}}^{\widetilde{S}}\ .
\end{equation*}
\vspace{10pt} 

Let $\delta_i$ be the (complex) codimension of $\gamma_i$,
$$\gamma_i \in H^{2\delta_i}(S,\mathbb{Q})\ .$$
Conjecture C1 implies the invariant $\big\langle \prod_{i=1}^r \tau_{\alpha_i}(\gamma_i) \big\rangle_{g,\beta}^S$ depends only
upon $g$, the divisibility of $\beta$, and all the
pairings
$$\langle \gamma_i, \gamma_j \rangle\,, \ \ 
\langle \gamma_i, \beta \rangle\,, \ \
\langle \beta, \beta \rangle$$
for $\delta_i=\delta_j=1$.
For the Gromov-Witten theory of curves, a similar invariance 
statement
has been proven in \cite{OPvir}.

Conjecture C2 expresses descendent invariants in imprimitive
classes in term of primitive classes.
Let $\beta_{h}$ be a primitive curve class on $S$.
Since all invariants vanish if $h<0$, we assume $h\geq 0$. 
Let $m$ be a positive integer.
For every divisor $k$ of $m$, let $S_k$ be a $K3$ surface
with a real isometry
$$\varphi_k:\Big(H^2(S,\mathbb{R})\, ,\langle,\rangle\Big) 
\rightarrow \Big(H^2(S^k\, ,\mathbb{R}),\langle,\rangle \Big)$$
for which
$\varphi(\frac{m}{k}\beta_h)$ is a primitive and effective
curve class on $S_k$. 
\begin{enumerate}
\item[$\bullet$]
If $h>0$, such $S_k$ are easily found. 
\item[$\bullet$] If $h=0$, such $S_k$
exist only in the
$k=m$ case.
\end{enumerate}

\vspace{10pt}
\noindent{\bf{Conjecture C2.}} 
{\em For primitive classes $\beta_h$ and  $m>0$,
\begin{equation*}
\left\langle \prod_{i=1}^r \tau_{\alpha_i}(\gamma_i) \right\rangle_{g,m\beta_h}^S
= \ \sum_{k|m}
k^{2g-3+\sum_{i=1}^n \delta_i}
\left\langle \prod_{i=1}^n \tau_{\alpha_i}(\varphi_{k}(\gamma_i)) 
\right\rangle_{g,\varphi_k(\frac{m}{k} \beta_h)}^{S_k}\ .
\end{equation*}}
%where $S_k$ are K3 surfaces with choosen identifications $H^{\ast}(S) = H^{\ast%}(S_k)$ and
%$\varphi_k$ is a degree-preserving real isometry of $H^{\ast}(S;\BR)$, that sen%ds $\frac{m}{k} \beta_h$ to
%a primitive $S_k$-effective class of square $(\frac{m}{k} \beta_h)^2$ and restricts to the identity on $H^0(S;\BR)$ and $H^4(S; \BR)$}
\vspace{10pt} 

In the $h=0$ case, the $k\neq m$ terms on the
right side of the equality in Conjecture C2 are
defined to vanish. By Conjecture C1, the right side is
independent of the choices of $S_k$ and $\varphi_k$.

The first evidence: the KKV formula interpreted 
as the Hodge integral \eqref{kkv}
exactly satisfies Conjecture C2 with the integrand viewed
as having {\em no} descendent insertions. In fact, 
$(-1)^g\lambda_g$ can be expanded in terms of descendent
integrands on strata --- applying Conjecture C2 to
such an expansion exactly yields the multiple
cover scaling of the KKV formula.
In particular, Conjecture C2 together with the KKV formula
in the primitive case {\em implies} the full KKV formula.

Conjecture B, when fully expanded, has a scaling factor of 
$k^{2g-3}$ which corresponds to Conjecture C2 with no
insertions. In fact, Conjecture B {\em follows} from Conjecture
C2 via the product formula \cite{Beh} for virtual
classes in Gromov-Witten theory. Conjecture C2 was motived
for us by Conjecture B.

The second evidence: Maulik in \cite[Theorem 1.1]{DM} calculated 
descendents for the $A_1$ singularity. We may interpret
the calculation of \cite{DM} as verifiying Conjecture C2 in
case $h=0$. The scaling of Conjecture C2 appears in \cite[Theorem 1.1]{DM}
as the final result because the primitive contributions
corresponding to $k\neq m$ all {\em vanish} in the $h=0$ case.
Of course, the $A_1$ singularity only captures codimensions $0$ and $1$ for $\delta$.

A simple example not covered by the two above cases is the
integral
\begin{equation}\label{tvvy} 
\big\langle \tau_{0}(\mathsf{p}) \big\rangle_{1,m\beta_1}^S
\end{equation}
where $\mathsf{p}\in H^4(S,\mathbb{Q})$ is the point class.
The primitive class $\beta_1$ may be taken to be the fiber
$F$ of an elliptically fibered $K3$ surface
$$\pi: S \rightarrow \mathbb{P}^1\ .$$
The primitive invariant is immediate: 
$$\big\langle \tau_{0}(\mathsf{p}) \big\rangle_{1,\beta_1}^{K3} =1 $$
Hence, Conjecture C2 yields the prediction
\begin{eqnarray*}
\left\langle \tau_{0}(\mathsf{p}) \right\rangle_{1,m\beta_h}^S
& = & \sum_{k|m}
k^{2-3+2}
\left\langle \tau_{0}(\mathsf{p}) 
\right\rangle_{1,\beta_1}^{K3}\ \\
& = & \sum_{k|m} k\, .
\end{eqnarray*}
We can evaluate \eqref{tvvy} directly from the geometry of
stable maps in the class $mF$ of $S$. The integral equals the number of connected
degree $m$ covers of an elliptic curve by an elliptic curve (times $m$ for the
insertion),
$$m \sum_{k|m} \frac{1}{k} = \sum_{k|m} k\ ,$$
which agrees with the prediction.

A much more interesting example is the genus 2 invariant
\begin{equation*} 
\big\langle \tau_{0}(\mathsf{p}),\tau_0(\mathsf{p}) \big\rangle_{2,2\beta_2}^S
\end{equation*}
in twice the primitive class $\beta_2$. Via standard geometry,
$\beta_2$ may be taken to be the hyperplane section of 
a $K3$ surface $S$ with a degree 2 cover
$$\epsilon:S \rightarrow \mathbb{P}^2$$
branched along a nonsingular sextic $$C_6 \subset \mathbb{P}^2\ .$$
Conjecture C2 predicts the following equation:
\begin{eqnarray*} 
\big\langle \tau_{0}(\mathsf{p}),\tau_0(\mathsf{p}) \big\rangle_{2,2\beta_2}^S & = & 
\big\langle \tau_{0}(\mathsf{p}),\tau_0(\mathsf{p}) \big\rangle_{2,\beta_5}^{K3}\, +\, 2^{2\cdot2-3+4} \big\langle \tau_{0}(\mathsf{p}),\tau_0(\mathsf{p}) \big\rangle_{2,\beta_2}^{K3}\, .
\end{eqnarray*}
The primitive counts can be found in \cite[Theorem 1.1]{BL},
$$ \big\langle \tau_{0}(\mathsf{p}),\tau_0(\mathsf{p}) \big\rangle_{2,\beta_2}^{K3}= 1, \ \ \
\big\langle \tau_{0}(\mathsf{p}),\tau_0(\mathsf{p}) \big\rangle_{2,\beta_5}^{K3}= 8728
\, .$$
So we obtain the prediction
\begin{equation}\label{vqq2} 
\big\langle \tau_{0}(\mathsf{p}),\tau_0(\mathsf{p}) \big\rangle_{2,2\beta_2}^S \ = \ 8728+2^5\cdot 1\ =\ 8760\, .
\end{equation}

The verification of \eqref{vqq2} is more subtle than the primitive calculation.
We study the geometry of curves in class $2\beta_2$ on the branched $K3$ surface $S$.
The two point insertions on $S$ determine two points $p,q\in \mathbb{P}^2$.
There are 3 contributions to the invariant \eqref{vqq2}:
\begin{enumerate}
\item[(i)] genus 2 curves in the series $2\beta_2$ arising as $\epsilon^{-1}(Q)$
where $Q\subset \mathbb{P}^2$ is a conic passing through $p$ and $q$ and tangent to
the branch divisor $C_6$ at 3 distinct points,
\item[(ii)] genus 2 curves which are the union of two genus 1 curves
arising as $\epsilon$ inverse images of a tangent line of $C_6$ through $p$ and
a tangent line of $C_6$ through $q$,
\item[(iii)] genus 2 curves which are the union of genus 2 and genus 0 curves
arising as the $\epsilon$ inverse
images of the unique line passing through $p$ and $q$ and 
a bitangent line of $C_6$.
\end{enumerate}
The most difficult count of the three is the first. An analysis shows there are no
excess issues, hence (i) is equal to the corresponding genus 0 relative invariant of
$\mathbb{P}^2/C_6$,
\begin{equation}\label{vzz4}
\int_{[\overline{M}_{0,2}(\mathbb{P}^2/C_6, 2)_{(1)^6 (2)^3}]^{vir}} \text{ev}_1^{-1}(p) \cup
\text{ev}_2^{-1}(q) \ =  6312\ ,
\end{equation}
where $(1)^6(2)^3$ indicates the (unordered) relative boundary condition of 3-fold 
tangency.\footnote{To calculate the relative invariant \eqref{vzz4},
we have used the program GROWI written by A. Gathmann and available 
on his webpage \cite{growi} at TU Kaiserslautern. The submission line to GROWI is
$${\text{growi}\ \ N=1,G=0,D=2,E=6,\, H^2:2,\, [1,2]:3 }\, ,$$
and the output is $37872=3!\cdot 6312$. Since GROWI orders
the 3 relative tangency points (which we do not do in \eqref{vzz4}), a
division by $3!$ completes the calculation.

In addition to providing the software, Gathmann inspired our entire
approach to $\big\langle \tau_{0}(\mathsf{p}),\tau_0(\mathsf{p}) \big\rangle_{2,2\beta_2}^S$
by his imprimitive genus 0 Yau-Zaslow calculation in \cite{Gath}.}

For (ii), there are 30 tangent lines of $C_6$ through $p$ and another 30 through $q$.
Since we have a choice of node over the intersection of the two lines, the
contribution (ii) is
$$2\cdot30^2 = 1800\ .$$
Since the number of bitangent to $C_6$ is 324, the contribution (iii)
is 
$$2\cdot 324= 648$$
remembering again the factor 2 for the choice of node.
Hence, we calculate
$$
\big\langle \tau_{0}(\mathsf{p}),\tau_0(\mathsf{p}) \big\rangle_{2,2\beta_2}^S \ = \ 6312+1800+648 =\ 8760\,
$$
in perfect (and nontrivial) agreement with the prediction \eqref{vqq2}.

\subsection{Gromov-Witten/Pairs correspondence}
Let $S$ be a nonsingular projective $K3$ surface, and let
$$X=S\times E\, .$$
A {\em{stable pair}} $(F,s)$ is a coherent sheaf $F$ with dimension 1 
support in $X$ and a  section $s\in H^0(X,F)$ satisfying the following stability condition:
\begin{itemize}
\item $F$ is \emph{pure}, and
\item the section $s$ has zero dimensional cokernel.
\end{itemize}
To a stable pair, we associate the Euler characteristic and
the class of the support $C$ of $F$,
$$\chi(F)=n\in \mathbb{Z} \  \ \ \text{and} \ \ \ [C]=(\beta,d)\in H_2(X,\mathbb{Z})\,.$$
For fixed $n$ and $(\beta,d)$,
there is a projective moduli space of stable pairs $P_n(X,(\beta,d))$, see \cite[Lemma 1.3]{PT1}. 

The moduli space $P_n(X,(\beta,d))$ has a perfect obstruction theory of
virtual dimension 0 which yields a vanishing virtual fundamental
class. If $\beta\in \text{Pic}(S)$ is a positive
class, then the obstruction theory
can be reduced to obtain
virtual dimension 1.
 Let $\beta^\vee\in H^2(S,\mathbb{Q})$
be {\em any} class satisfying
\begin{equation}\label{fwwf2}
\langle \beta, \beta^\vee \rangle =1
\end{equation}
with respect to the intersection pairing on $S$.
For $n\in \mathbb{Z}$, we define
\begin{equation}
\label{fccf2}
\mathsf{P}^X_{n,\beta,d} = 
\int_{[P_{n}(X,(\beta,d))]^{red}} \tau_0\Big(\pi_1^*(\beta^\vee)
\cup \pi_2^*([0])\Big)\ .
\end{equation}
We follow here the notation of Section 0 for 
the projections $\pi_1$ and $\pi_2$. 
The insertions in stable pairs theory are defined in \cite{PT1}.
Definition \eqref{fccf2} is parallel to \eqref{fccf}.
As in the Gromov-Witten case, definition \eqref{fccf2} is
independent of $\beta^\vee$ satisfying \eqref{fwwf2}
by degeneration and the study of the stable pairs theory of
the rubber geometry $Y$.

Define the generating series of stable pairs invariants
for $X$ is class $(\beta,d)$ by
$$ \mathsf{P}^{X}_{\beta,d}(y)
=  \sum_{n\in \mathbb{Z}} \mathsf{P}^{X}_{n,\beta,d}\, y^{n}.$$
Elementary arguments show the moduli spaces
$P_n(X,(\beta,d))$ are empty for sufficiently negative $n$, so
${\mathsf{P}}^{X}_{\beta,d}$ is a Laurent series in $y$.
Let $$ \mathsf{N}^{X\bullet}_{\beta,d}(u)
=  \sum_{g \in \mathbb{Z}} \mathsf{N}^{X\bullet}_{g,\beta,d}\, u^{2g-2}$$
be the corresponding Gromov-Witten series for {\em disconnected}
invariants.

\vspace{10pt}
\noindent{\bf Conjecture D.}
{\em For a positive class $\beta\in \text{\em{Pic}}(S)$ and all ${d}$, 
the series  $\mathsf{P}^{X}_{\beta,d}(y)$
is the Laurent expansion of a rational function in $y$ and
$$
{\mathsf{N}}^{X\bullet}_{\beta,d}(u)\ =\ {\mathsf{P}}^{X}_{\beta,d}(y)
$$
after the variable change $y=-\exp(iu)$.}
\vspace{10pt}

The $d=0$ case of Conjecture D is exactly the Gromov-Witten/Pairs
correspondence established in \cite{PT2} for all $\beta$ as a step in the
proof of the KKV conjecture. The following result is further
evidence for Conjecture D.

\begin{prop}
 For {\em primitive} $\beta_h\in \text{\em{Pic}}(S)$ and all $d$, 
the series  $\mathsf{P}^{X}_{\beta_h,d}(y)$
is the Laurent expansion of a rational function in $y$ and
$$
{\mathsf{N}}^{X\bullet}_{\beta,d}(u)\ =\ {\mathsf{P}}^{X}_{\beta,d}(y)
$$
after the variable change $y=-\exp(iu)$.
\end{prop}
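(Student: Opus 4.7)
The plan is to reduce both the rationality statement and the GW/PT equality, via degeneration of $E$ to a nodal rational curve, to a rubber GW/P correspondence on $Y=S\times R$ in the primitive class $\beta_h$, and then to reduce the rubber correspondence further to known $K3$ results via a second degeneration of $S$.

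First, I would degenerate $E$ to a nodal rational curve. On the Gromov-Witten side this has already been carried out in the paper: formula \eqref{kssk} expresses $\mathsf{N}^{X\bullet}_{g,\beta_h,d}$ as a sum over cohomology-weighted partitions $\nu\in\mathcal{P}(d)$ of rubber invariants $\widetilde{\mathsf{N}}^{Y\bullet}_{\beta_h,d}(\nu,\nu^\vee)$ weighted by $\Fz(\nu)\, u^{2\ell(\nu)}$. Running the parallel degeneration on the stable pairs side --- using the PT degeneration formula for the reduced virtual class, the stable pairs rubber rigidification, and the PT divisor axiom applied to the insertion $\pi_2^*([0])$ --- produces an analogous expression for $\mathsf{P}^X_{\beta_h,d}(y)$ in terms of rubber PT invariants $\widetilde{\mathsf{P}}^{Y\bullet}_{\beta_h,d}(\nu,\nu^\vee)$ with a matching combinatorial $y$-weight. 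Under $y=-\exp(iu)$ the known degree-$0$ GW/P correspondence of \cite{MNOP1,PT1} identifies the two weight systems, so it suffices to prove the rubber correspondence
$$\widetilde{\mathsf{N}}^{Y\bullet}_{\beta_h,d}(\nu,\nu^\vee)(u)\ =\ \widetilde{\mathsf{P}}^{Y\bullet}_{\beta_h,d}(\nu,\nu^\vee)(y)$$
for every dual pair of cohomology-weighted partitions.

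Second, I would use deformation invariance of the reduced theories in families for which $\beta_h$ remains algebraic to replace $S$ by an elliptic $K3$ surface $\pi:S\to\BP^1$ with a section, writing $\beta_h=B+hF$ for $B$ the section class and $F$ the fiber class. I would then degenerate $S$ along a nonsingular fiber of $\pi$ into $S_1\cup_F S_2$ with $S_1$ a rational elliptic surface. Applying the degeneration formula to the rubber $Y=S\times R$ splits the rubber invariants into relative invariants for the smooth $3$-folds $S_i\times R$. On $S_1\times R$ no reduction is required and the standard $3$-fold GW/P correspondence of \cite{MNOP1,PT1} applies directly. On $S_2\times R$ only classes of the form (a multiple of the fiber class of $\pi$, $d$) remain, and the reduced rubber correspondence in these classes follows from the primitive-class $K3$ GW/P arguments of \cite{MPT,PT2}. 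The rationality of $\mathsf{P}^X_{\beta_h,d}(y)$ is a consequence of the rationality established along the way in these two reductions.

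The main obstacle will be step one. The degeneration of $E$ and the rubber rigidification must be mirrored on the stable pairs side with the \emph{reduced} virtual class; in particular, verifying that the reduced PT obstruction theory degenerates correctly, that the PT divisor axiom applies to $\pi_2^*([0])$ after rigidification, and that the resulting $y$-weight of a partition $\nu$ is $\Fz(\nu)$ times the expected factor matching the GW weight $\Fz(\nu)\, u^{2\ell(\nu)}$ under $y=-\exp(iu)$, all require care of the kind developed in \cite{MPT,PT2} but adapted to the $K3\times E$ reduced setting. Once step one is in place, step two is an essentially routine application of the existing correspondences.
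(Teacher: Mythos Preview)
Your plan is workable in spirit but takes a longer and more delicate route than the paper, and your step~2 contains a confusion.

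The paper does \emph{not} degenerate $E$ at all. Instead it degenerates $S$ directly in $X=S\times E$: taking $S$ elliptic with section, one has
\[
S\times E\ \leadsto\ (R\times E)\ \cup_{F\times E}\ (R\times E),
\]
with $R$ a rational elliptic surface. The key input from \cite{MPT} is that under this degeneration the \emph{reduced} virtual class on the $K3$ side is expressed entirely in terms of the \emph{standard} virtual classes of the relative geometries $R\times E\,/\,F\times E$ on the pieces. Once the theory is standard (no reduction), the GW/Pairs correspondence for $R\times E\,/\,F\times E$ is available from \cite{PaPix1,PaPix2}, and the proposition follows immediately. In particular, the ``main obstacle'' you identify --- matching the reduced PT degeneration of $E$ to rubber with the GW side --- is simply bypassed.

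Your route, by contrast, first passes to rubber $Y=S\times R$ and only then degenerates $S$, which produces pieces $S_i\times R$ that are simultaneously rubber over $R$ and relative to $F\times R$. Establishing GW/P for such doubly-relative rubber geometries is not in \cite{MNOP1,PT1} (those papers formulate the conjecture; the relevant proofs are in \cite{PaPix1,PaPix2}), and would require additional work. More seriously, your description of step~2 is muddled: in the MPT degeneration \emph{both} $S_1$ and $S_2$ are rational elliptic surfaces, so neither piece carries a reduced theory, and your appeal to ``the reduced rubber correspondence'' and ``primitive-class $K3$ GW/P arguments'' on $S_2$ does not make sense as stated. The correct statement is that the reduction is absorbed entirely by the degeneration formula itself (this is the MPT point), after which both sides are standard. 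If you reorganize your argument to degenerate $S$ first, the proof becomes a short paragraph.
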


\begin{proof} We may assume $S$ is elliptically fibered as
in Section \ref{hvvc}. The reduced virtual class of
the moduli spaces of stable maps and stable pairs
under the degeneration 
\begin{equation}\label{vqqa}
S\times \mathbb{C}\ \ \leadsto\ \ R \times \mathbb{C} \ \cup\ R \times \mathbb{C}
\end{equation}
was studied in \cite{MPT}. Here, $R$ is a rational elliptic
surface. The two components of
the degeneration \eqref{vqqa} meet along
along $F\times \mathbb{C}$ where
$F\subset R$ is a nonsingular fiber of
$$\pi: R \rightarrow \p^1\ .$$
The crucial observation is that the {\em reduced} virtual class of
the moduli spaces associated to $S\times \mathbb{C}$
may be expressed in terms of the standard virtual classes
of the relative geometries \eqref{vqqa} of the degeneration.
The above argument is valid also for the degeneration
\begin{equation}\label{vqqa2}
X=S\times E \ \ \leadsto \ \ R \times E \ \cup\ R \times E\ .
\end{equation}
Since the GW/Pairs correspondence for the relative geometry
$$R\times E \, / \, F \times E$$
follows from the results of \cite{PaPix1,PaPix2}, 
we obtain the reduced correspondence for $S\times E$.
\end{proof}

\section{The full matrix}

\subsection{The Fock space}
The Fock space of the K3 surface $S$,
\begin{equation} \CF(S) = \bigoplus_{d \geq 0} \CF_d(S) = \bigoplus_{d \geq 0} H^{\ast}(S^{[d]},\BQ) \label{P1}, \end{equation}
is naturally bigraded with the $(d,k)$-th summand given by
\[ \CF_{d}^k(S) = H^{2 (k + d)}(S^{[d]},\BQ) \]
%We call $d$ the grading by {\em level} and $k$ the grading by {\em shifted complex cohomological degree}.
For a bihomogeneous element $\mu \in \CF_{d}^k(S)$, we let
\[ | \mu | = d, \quad \quad k(\mu) = k. \]
The Fock space
$\CF(S)$ carries a natural scalar product $\blangle \cdot\, |\, \cdot \brangle$
defined by declaring the direct sum \eqref{P1} orthogonal and setting 
\[ \blangle \mu\, |\, \nu \brangle = \int_{S^{[d]}} \mu \cup \nu \]
for every $\mu, \nu \in H^{\ast}(S^{[d]},\BQ)$.
For $\alpha, \alpha' \in H^{\ast}(S,\BQ)$, we also write
\[ \langle \alpha, \alpha' \rangle = \int_S \alpha \cup \alpha'. \]
If $\mu, \nu$ are bihomogeneous, then $\langle \mu | \nu \rangle$ is nonvanishing only in the case $|\mu| = |\nu|$ and $k(\mu) + k(\nu) = 0$.

For all $\alpha \in H^{\ast}(S,\BQ)$ and $m \neq 0$, the Nakajima operators $\Fp_m(\alpha)$ act on $\CF(S)$
bihomogeneously of bidegree $(-m, k(\alpha))$,
\[ \Fp_{m}(\alpha) : \CF_d^k \ra \CF_{d-m}^{k+ k(\alpha)} \,. \]
The commutation relations
\begin{equation} [ \Fp_{m}(\alpha), \Fp_{m'}(\alpha') ] = -m \delta_{m + m',0} 
\blangle \alpha, \alpha'\brangle\, {\rm id}_{\CF(S)}, \label{N1} \end{equation}
are satisfied 
for all $\alpha, \alpha' \in H^{\ast}(S)$ and all $m, m' \in \BZ \setminus 0$.

The inclusion of the diagonal $X \subset X^m$ induces a map
\[ \tau_{\ast m} : H^{\ast}(X,\BQ) \ra H^{\ast}(X^m,\BQ) \stackrel{\sim}{=} H^{\ast}(X,\BQ)^{\otimes m} \, .\]
For $\tau_{\ast} = \tau_{\ast 2}$, we have 
\[ \tau_{\ast}(\alpha) = \sum_{i,j} g^{ij} \, (\alpha \cup \gamma_i) \otimes \gamma_j, \]
where $\{ \gamma_i \}$ is a basis of $H^{\ast}(X)$ and $g^{ij}$ is the inverse of the intersection matrix $g_{ij} = \blangle \gamma_i, \gamma_j \brangle$.

For $\gamma \in H^{\ast}(S,\BQ)$, define the degree zero Virasoro operator
\[ L_0(\gamma) = - \frac{1}{2} \sum_{k \in \BZ \setminus 0} : \Fp_k \Fp_{-k} : \tau_{\ast}(\gamma) = - \sum_{k \geq 1} \sum_{i,j} g^{ij} \Fp_{-k}(\gamma_i \cup \gamma) \Fp_k(\gamma_j) \, , \]
where $: -- :$ is the normal ordered product, see \cite{LH}.
For $\alpha \in H^{\ast}(S,\BQ)$, we have then
\[ [ \Fp_{k}(\alpha), L_0(\gamma) ] = k \Fp_{k}(\alpha \cup \gamma). \]
%The operator $L_0(\gamma)$ is of bidegree $(0, \delta(\gamma))$.
Let $\e \in H^{\ast}(S)$ denote the unit. The restriction of $L_0(\gamma)$ to $\CF_d(S)$,
\[ L_0(\gamma)|_{\CF_d(S)} : H^{\ast}(S^{[d]},\BQ) \ra H^{\ast}(S^{[d]},\BQ) \]
is the cup product by the class
\[ D_d(\gamma) = \frac{1}{(d-1)!} \Fp_{-1}(\gamma) \Fp_{-1}(\e)^{d-1} \in H^{\ast}(S^{[d]},\BQ) \]
of subschemes incident to $\gamma$, see \cite{LT}.
In the special case, $\gamma = \e$, $L_0 = L_0(\e)$ is the energy operator,
\[ L_0(\e)|_{\CF_d(S)} = d \cdot {\rm id}_{\CF_d(S)} \,. \]

\noindent Finally, define Lehn's diagonal operator \cite{LT}:
\[ \partial = - \frac{1}{2} \sum_{i,j \geq 1} ( \Fp_{-i} \Fp_{-j} \Fp_{i+j} + \Fp_i \Fp_j \Fp_{-(i+j)} ) \tau_{3 \ast}( [X] ) \,. \]
For $d \geq 2$, $\partial$ acts on $\CF_d(S)$ by the cup product with $-\frac{1}{2} \Delta_{S^{[d]}}$, where
\[ \Delta_{S^{[d]}} = \frac{1}{(n-2)!} \Fp_{-2}(\e) \Fp_{-1}(\e)^{n-2} \]
denotes the class of the diagonal in $S^{[d]}$.

%If $\alpha \in H^{\ast}(S)$, let $\delta(\alpha)$ be the complex cohomological degree, 
%$$k(\alpha) = \delta(\alpha) - 1\, .$$

\subsection{Quantum multiplication}
Let $S$ be an elliptic $K3$ surface with section class $B$ and fiber class $F$. For $h\geq 0$, let
$$\beta_h = B + hF\, .$$ 
We will define quantum multiplication on $\CF(S)$ with respect to the classes $\beta_h$.

For $\alpha_1, \dots, \alpha_m \in H^{\ast}(S^{[d]},\BQ)$, define the quantum bracket
\begin{multline}
\blangle \alpha_1, \dots, \alpha_m \brangle_q^{S^{[d]}} = \\
\sum_{h \geq 0} \sum_{k \in \BZ} y^k q^{h-1} \int_{[\M_{0,m}(S^{[d]}, C(\beta_h) + kA)]^{\text{red}}} \ev_1^{\ast}(\alpha_1) \cdots 
 \ev_m^{\ast}(\alpha_m) \label{P2}
\end{multline}
\noindent as an element of 
%$\BQ((y))[[q,q^{-1}]]$. 
$\BQ((y))((q))$.{\footnote{By standard arguments \cite{GO}, the
moduli space $\M_{0,m}(S^{[d]}, C(\beta_h) + kA)$ is empty for $k$
sufficiently negative.}}
Because $d$ is determined by the $\alpha_i$, we often omit $S^{[d]}$. 
The multilinear
pairing $\langle \cdots \rangle$ extends naturally to the Fock space by declaring the pairing 
orthogonal with respect to \eqref{P1}.

Let $\epsilon$ be a formal parameter with $\epsilon^2 = 0$. For 
$$a,b,c \in H^{\ast}(S^{[d]},\BQ)\, ,$$ define the (primitive) quantum product $\ast$ by
\[ \langle a\ |\ b \ast c \brangle\ =\ \blangle a\ |\ b \cup c \brangle + \epsilon \cdot \blangle a, b, c \brangle_q \,. \]
As $\blangle \cdots \brangle_q$ takes values in 
%$\BQ((y))[[q,q^{-1}]]$, 
$\BQ((y))((q))$, 
the product $\ast$ is defined over the ring
$$H^{\ast}(S^{[d]},\BQ) \otimes \BQ((y))((q)) \otimes \BQ[\epsilon]/\epsilon^2\, .$$
By the WDVV equation in the reduced case (see \cite[Appendix 1]{GO}), $\ast$ is associative. 
We extend $\ast$ to an associative product on $\CF(S)$ by $b \ast c = 0$ 
whenever $b$ and $c$ are in different summands of \eqref{P1}.

The parameter $\epsilon$ has to be introduced since we use \emph{reduced}
Gromov-Witten theory to define the bracket \eqref{P2}. It can be thought of as an infinitesimal virtual weight on the canonical class $K_{S^{[n]}}$ 
 and corresponds in the toric case (see \cite{MOH,OPH}) to the equivariant parameter $(t_1 + t_2)$ mod $(t_1 + t_2)^2$.

We are mainly interested in the 2-point quantum operator 
\[ \CE^{\Hilb} : \CF(S) \otimes \BQ((y))((q)) \ra \CF(S) \otimes \BQ((y))((q)) \]
defined by the bracket
\[ \blangle a\ |\ \CE^{\Hilb} b \brangle = \blangle a , b \brangle_q \]
and extended $q$ and $y$ linearly.
Because $\M_{0,2}(S^{[d]},\alpha)$ has reduced virtual dimension $2d$, $\CE^{\Hilb}$ is a self-adjoint operator of bidegree $(0,0)$.

Let $D_1, D_2 \in H^2(S^{[d]},\BQ)$ be divisor classes. 
By associativity and commutativity,
\begin{equation} D_1 \ast ( D_2 \ast a ) = D_2 \ast ( D_1 \ast a ) \label{12333} \end{equation}
for all $a$.
By the divisor axiom, we have
\begin{eqnarray*}
D_d(\gamma) \ast \cdot\, \Big|_{\CF_d(S)}              & = & \Big( L_0(\gamma) + \epsilon\, \Fp_{0}(\gamma) \CE^{\Hilb} \Big)\Big|_{\CF_d(S)} \\
\frac{-1}{2} \Delta_{S^{[d]}} \ast \cdot\, \Big|_{\CF_d(S)} & = & \Big( \partial    + \epsilon\, y \frac{d}{d y} \CE^{\Hilb} \Big)\Big|_{\CF_d(S)}\, .
\end{eqnarray*}
for every $\gamma \in H^2(S,\BQ)$. 
Here, $\frac{d}{dy}$ is formal differentiation with respect to the variable $y$,
and $\Fp_0(\gamma)$ for $\gamma \in H^{\ast}(S)$ is the degree $0$ Nakajima operator defined by the following condition\footnote{This definition precisely matches the action of the extended Heisenberg algebra $\blangle \Fp_k(\gamma) \brangle, k \in \BZ$ on the full Fock space
$\CF(S) \otimes \BQ[ H^{\ast}(S,\BQ) ]$
under the embedding $q^{h-1} \mapsto q^{B + hF}$, see \cite[section 6.1]{KY}.}:
\[ [ \Fp_0(\gamma), \Fp_{m}(\gamma') ] = 0 \]
for all $\gamma' \in H^{\ast}(S), m \in \BZ$ and
\[ \Fp_0(\gamma)\, q^{h-1} y^k \, 1_S = \blangle \gamma, \beta_h \brangle q^{h-1} y^k\, 1_S \,. \]

After specializing $D_i$, we obtain the main commutator relations for $\CE^{\Hilb}$ on $\CF(S)$,
\begin{equation} \label{P4}
\begin{aligned}
\Fp_0(\gamma)\, [ \CE^{\Hilb}, L_0(\gamma') ]  & = \Fp_0(\gamma')\, [ \CE^{\Hilb}, L_0(\gamma) ] \\
\Fp_0(\gamma)\, [ \CE^{\Hilb}, \partial ]  & = y \frac{d}{dy}\, [ \CE^{\Hilb}, L_0(\gamma) ]  \,,
\end{aligned}
\end{equation}
for all $\gamma, \gamma' \in H^2(S,\BQ)$. The equalities \eqref{P4} are true
only after restricting to $\CF(S)$, and not on all of $\CF(S) \otimes \BQ((y))((q))$ by
definition of $\Fp_0(\gamma)$ and $y \frac{d}{dy}$.

Equation \eqref{P4} shows the commutator of $\CE^{\Hilb}$ with a divisor intersection operator 
to be essentially independent of the divisor.

\subsection{The operators $\CE^{(r)}$}
Let
\begin{equation} \varphi_{m,\ell}(y,q)\, \in \BC((y^{1/2}))[[q]] \label{006} \end{equation}
be fixed power series that satisfy the symmetries
\begin{equation}
\begin{aligned} \label{symmetries_phi}
 \varphi_{m,\ell} & = - \varphi_{-m, -\ell} \\
 \ell \varphi_{m,\ell} & = m \varphi_{\ell, m} \,.
\end{aligned}
\end{equation}
for all $(m,\ell) \in \BZ^2 \setminus 0$.
Depending on the functions \eqref{006}, define for all $r \in \BZ$ operators
\[ \CE^{(r)} : \CF(S) \otimes \BC((y^{1/2}))((q)) \ra \CF(S) \otimes \BC((y^{1/2}))((q)) \]
by the following recursion relations:

\vspace{8pt}
\noindent{\bf Step 1.}
For all $r \geq 0$,
\begin{equation*}
\CE^{(r)} \Big|_{\CF_0(S) \otimes \BC((y^{1/2}))((q))} = \frac{\delta_{0r}}{F(y,q)^2 \Delta(q)} \cdot \id_{\CF_0(S) \otimes \BC((y^{1/2}))((q))},
\end{equation*}
where $F(y,q)$ and $\Delta(q)$ are the functions defined in section $2$ considered as formal expansions in the variables $y$ and $q$. 

\vspace{8pt}
\noindent{\bf Step 2.}
For all $m \neq 0, r \in \BZ$,
{\small{\[ [ \Fp_{m}(\gamma), \CE^{(r)} ] = \sum_{\ell \in \BZ} \frac{\ell^{k(\gamma)}}{m^{k(\gamma)}} : \Fp_\ell(\gamma) \CE^{(r+m-\ell)} : \, \varphi_{m,l}(y,q) \]}}
Here $k(\gamma)$ denotes the shifted complex cohomological degree of $\gamma$,
\[ \gamma \in H^{2(k(\gamma) + 1)}(S;\BQ) \,, \]
and $: -- :$ is a variant of the normal ordered product defined by
\[ : \Fp_\ell(\gamma) \CE^{(k)} : = \begin{cases}
                             \Fp_{\ell}(\gamma) \CE^{(k)} & \text{ if } \ell \leq 0 \\
                             \CE^{(k)} \Fp_{\ell}(\gamma) & \text{ if } \ell > 0 \,.
                            \end{cases}
\]

The two steps uniquely determine the operators $\CE^{(r)}$. 
It follows from the symmetries \eqref{symmetries_phi},
that $\CE^{(r)}$ respects the Nakajima commutator relations \eqref{N1}. 
 Hence 
$\CE^{(r)}$ acts on $\CF(S)$ and is therefore well defined.
By definition, it is an operator of bidegree $(-r,0)$, which is $y$-linear, but \emph{not} $q$ linear.
% 
% \textbf{Remark.} They are related the old functions by
% \[ \phi_{m,0} = (-i)^m H_m, \quad \phi_{m, \ell} = - \left( (-i)^{m+\ell} m G_{m,\ell} + \delta_{m,\ell} \right), \quad \phi_{m, -\ell} = (-i)^{m+\ell} m J_{m\ell} \]
% for all $\ell, m > 0$.

\vspace{8pt}
\noindent{\bf Conjecture E.}
{\em There exist unique functions $\varphi_{m,\ell}$ for $(m,\ell) \in \BZ^2 \setminus 0$ that satisfy:
\begin{enumerate}
\item[(i)] Initial conditions:
\[ \varphi_{1,1} = G(y,q) - 1, \quad \varphi_{1,0} = -i F(y,q), \quad \varphi_{1,-1} = -\frac{1}{2} q \frac{d}{dq}( F(y,q)^2 ) \,. \]
%$$ H_1 = F, \quad \quad G_{1,1} = G, \quad \quad J_{1,1} = F q \frac{dF}{dq}\ $$
\vspace{5pt}
\item[(ii)]
$\CE^{(0)}$ satisfies the WDVV equations:
\begin{align*}
\Fp_0(\gamma)\, [ \CE^{(0)}, L_0(\gamma') ] & = \Fp_0(\gamma')\, [ \CE^{(0)}, L_0(\gamma) ] \\
\Fp_0(\gamma)\, [ \CE^{(0)}, \partial ]     & = y \frac{d}{dy}\, [ \CE^{(0)}, L_0(\gamma) ]
\end{align*}
on $\CF(S)$ for all $\gamma, \gamma' \in H^2(S,\BQ)$. 
\end{enumerate}}

\vspace{8pt}
Conjecture E has been checked numerically on $\CF_d(S)$ for $d \leq 5$.
The functions $\varphi_{m,\ell} + {\rm sgn}(m) \delta_{ml}$ are expected to be quasi Jacobi forms with weights and index for all non-vanishing cases given by the following table:
\begin{center}
\begin{longtable}{|l | c | c | }
\hline
 & \text{index} & \text{weight} \\[0.5ex]
\hline %\\[0.04ex]
$m \neq 0, \ell \neq 0$ & $\frac{|m| + |\ell|}{2}$ & $0$ \\[0.5ex]
$m \neq 0, \ell = 0$ & $\frac{|m|}{2}$ & $-1$\\
\hline
\end{longtable}
\end{center}
\vspace{-0.5cm}
The first values of $\varphi_{m,l}$ are:
\begin{align*}
\varphi_{2,2} + 1& = 2 K^{4} \cdot \left( J_1^{2} \wp(z) - \frac{1}{12} J_1^{2} E_2 + \frac{3}{2} \wp(z)^{2} + J_1 \partial_{z}(\wp(z)) - \frac{1}{96} E_4 \right) \\
\varphi_{2,1} & = 2 K^{3} \cdot \left( J_1 \wp(z) - \frac{1}{12} J_1 E_2 + \frac{1}{2} \partial_z(\wp(z)) \right) \\
\varphi_{2,0} & = -2 \cdot J_1 \cdot K^{2} \\
\varphi_{2,-1} & = -\frac{4}{3} \cdot K^{3} \cdot \left( J_1^{3} - \frac{3}{2} J_1 \wp(z) - \frac{1}{8} J_1 E_2 - \frac{1}{4} \partial_z(\wp(z)) \right) \\
\varphi_{2,-2} & = 2 J_1 \cdot K^{4} \cdot \left( J_1^{3} - 2 J_1 \wp(z) - \frac{1}{12} J_1 E_2 - \frac{1}{2} \partial_z(\wp(z)) \right),
\end{align*}
where $K = iF$ and $J_1 = \partial_z(\log(F))$.

Conjectures E and F (below) were first
proposed in different but equivalent forms in \cite{GO}.

\subsection{Further conjectures}

Let $L_0$ be the energy operator on $\CF(S)$. We define the operator
\[ G^{L_0} : \CF(S) \otimes \BQ((y))((q)) \ra \CF(S) \otimes \BQ((y))((q)) \]
by
\[ G^{L_0}( \mu ) = G(y,q)^{|\mu|} \cdot \mu \]
for any homogeneous $\mu \in \CF(S)$.

\vspace{8pt}
\noindent{\bf Conjecture F.} For $S$ an elliptic K3 surface we have on $\CF(S)$
\[ \CE^{\Hilb} \ = \ \CE^{(0)} - \frac{1}{F^2 \Delta} G^{L_0} \, . \]
%\[ \blangle\ \mu\ |\ \CE^{\Hilb}_{\beta_h} \nu\ \brangle\ =\ [q^{h-1}]\ \blangle\ \mu\ |\ ( \CE_h^{(0)} - \frac{1}{F^2 \Delta} G^{L_0} ) \nu\ \brangle \,. \]
\vspace{4pt}

We stated the Conjecture for the elliptic $K3$ surface $S$ with respect to the classes 
$$\beta_h = B + hF\, .$$
 By extracting the $q^{h-1}$-coefficient and deforming the $K3$ surface, we obtain the $2$-point invariants for any pair $(S',\beta')$ of a $K3$ surface $S'$ and a primitive curve class $\beta'$ of square $2h-2$.

The trace 
 of the operator $\frac{1}{F^2 \Delta} G^{L_0}$ on the Fock space $\CF(S)$ is
\[ \Tr_{\CF(S)} \frac{1}{F^2 \Delta} \tilde{q}^{L_0 - 1} G^{L_0} = \frac{1}{F^2 \Delta} \sum_{d \geq 0} G^{d} \chi(S^{[d]}) \tilde{q}^{d-1}. \]
By G\"ottsche's formula \cite{Gottsche}, we obtain precisely the correction term \eqref{cxxc}.
Hence Conjectures A and F together imply
\begin{equation*} \Tr_{\CF(S)} \tilde{q}^{L_0 - 1} \CE^{(0)} = -\frac{1}{\chi_{10}(\Omega)} \,. \end{equation*}
The above equation is a purely algebraic statement about the operator $\CE^{(0)}$.

Let $P_n(Y, (\beta_h,d))$ be 
the moduli space of stable pairs on the straight rubber
geometry
$$Y=S \times R$$
defined in Section 1.
The reduced virtual dimension of the moduli space $P_n(Y, (\beta_h,d))$
is $2d$.
Let
\[ \ev_i : P_n(Y, (\beta_h,d)) \rightarrow
  S^{[d]}, \quad i = 0,\infty \]
be the boundary maps\footnote{For $d=0$, we take
$S^{[0]}$ to be a point.}.

Define the bidegree $(0,0)$ operator
\[ \CE^{\text{Pairs}} : \CF(S) \otimes \BQ((y))((q)) \ra \CF(S) \otimes \BQ((y))((q)) \]
on $\CF_d(S)$ by
\[ \big\langle \mu\ \big|\ \CE^{\text{Pairs}} \nu \big\rangle = 
\sum_{h \geq 0} \sum_{n \in \BZ} y^n q^{h-1} \int_{[P_n(Y, (\beta_h,d))]^{\text{red}}} 
\ev_0^{\ast}(\mu) \cup \ev_\infty^{\ast}(\nu)\, . \]

\vspace{8pt}
\noindent{\bf Conjecture G.} On the elliptic $K3$ surface $S$,
\[ \CE^{\Hilb} + \frac{1}{F^2\Delta} G^{L_0} = y^{-L_0} \CE^{\text{Pairs}}. \]
\vspace{8pt}

We have stated Conjecture G as relating $\CE^{\Hilb}$ and $\CE^{\text{Pairs}}$.
Combining Conjectures F and G leads to the direct prediction on $\CF(S)$:
$$\CE^{\text{Pairs}}\ =\ y^{L_0}\, \CE^{(0)}\ .$$

A conjecture relating the stable pairs theory of $S \times R$ to the Gromov-Witten side
is formulated exactly as in  Conjecture D.
We can express the conjectural relationship between the different theories by the triangle:
\begin{center}
\begin{tikzpicture}
 \draw[very thick]
   (0,0) node[below left, align = center]{Gromov-Witten\\theory\\of $K3 \times \BP^1$}
-- (1,1.732) node[above, align = center]{Quantum cohomology\\of $\Hilb^d(K3)$}
-- (2,0) node[below right, align = center]{Stable pairs\\theory\\of $K3 \times \BP^1$}
-- (0,0);
\end{tikzpicture}
\end{center}
%\pagebreak
\subsection{Three examples} 

%\begin{itemize} 
%\item[(i)] 

\noindent{\bf(i)} Let $F$ be the fiber of the elliptic fibration. Then, we have
\begin{align*}
\blangle \Fp_{-1}(F)^d 1_S\ |\ \CE^{(0)} \Fp_{-1}(F)^d 1_S \brangle & = (-1)^d \blangle 1_S\ |\ \Fp_{1}(F)^d \CE^{(0)} \Fp_{-1}(F)^d 1_S \brangle \\
& = (-1)^d \blangle 1_S\ |\ \Fp_0(F)^d \CE^{(d)} \varphi_{1,0}^d \Fp_{-1}(F)^d 1_S \brangle \\
& = (-1)^d \blangle 1_S\ |\ \Fp_0(F)^{2d} \CE^{(0)} (-1)^d \varphi_{1,0}^{d} \varphi_{-1,0}^d 1_S \brangle \\
& = \frac{\varphi_{1,0}^{d} \varphi_{-1,0}^d}{F(y,q)^2 \Delta(q)} \\
& = \frac{F(y,q)^{2d-2}}{\Delta(q)}
\end{align*}
in agreement with Theorem \ref{MThm0}. We have used $\Fp_0(F) = 1$ above.

\vspace{8pt}
%\item[(ii)] 
\noindent{\bf(ii)} Let $W = B + F$. Then $W^2 = 0$ and $\blangle W, \beta_h \brangle = h-1$. In particular
$\Fp_0(W)$ acts as $\partial_{\tau} = q \frac{d}{dq}$.
We have
\begin{align*}
\blangle \Fp_{-1}(W)^d 1_S\ |\ \CE^{(0)} \Fp_{-1}(W)^d 1_S \brangle
& = (-1)^d \blangle 1_S\ |\ \Fp_0(W)^d \CE^{(d)} \varphi_{1,0}^d \Fp_{-1}(W)^d 1_S \brangle \\
& = \blangle 1_S\ |\ \Fp_0(W)^{2d} \CE^{(0)} \varphi_{1,0}^{d} \varphi_{-1,0}^d 1_S \brangle \\
& = \partial_{\tau}^{2d} \left( \frac{\varphi_{1,0}^{d} \varphi_{-1,0}^d}{F(y,q)^2 \Delta(q)} \right)\\
& = \partial_{\tau}^{2d} \left( \frac{F(y,q)^{2d-2}}{\Delta(q)} \right).
\end{align*}

\vspace{8pt}
%\item[(iii)] 
\noindent{\bf(iii)}
Let $\pt \in H^4(S;\BZ)$ be the class of a point. For all $d \geq 1$, let
\[ C(F) = \Fp_{-1}(F) \Fp_{-1}(\pt)^{d-1} 1_S \in H_2(S^{[d]},\BZ)\, . \]
Then, assuming Conjecture F,
{\allowdisplaybreaks
\begin{align*}
\blangle C(F) \brangle_q & = \blangle C(F), D_d(F) \brangle_q \\
& = \frac{1}{(d-1)!} \blangle \Fp_{-1}(F) \Fp_{-1}(\pt)^{d-1} 1_S \ |\ \CE^{(0)} \Fp_{-1}(F) \Fp_{-1}(e)^{d-1} 1_S \brangle \\
& = \frac{1}{(d-1)!}  \blangle \Fp_{-1}(\pt)^{d-1} 1_S \ |\ \CE^{(0)} \varphi_{1,0} \varphi_{-1,0} \Fp_{-1}(e)^{d-1} 1_S \brangle \\
& = \frac{(-1)^{d-1}}{(d-1)!}  \blangle 1_S \ |\ \CE^{(0)} \varphi_{1,0} \varphi_{-1,0} (\varphi_{1,1} + 1)^{d-1} \Fp_{1}(\pt)^{d-1} \Fp_{-1}(e)^{d-1} 1_S \brangle \\
& = \frac{ \varphi_{1,0} \varphi_{-1,0} (\varphi_{1,1} + 1)^{d-1} }{F(y,q)^2 \Delta(q)} \\
& = \frac{ G(y,q)^{d-1} }{\Delta(q)}
\end{align*}
}
in full agreement with the first part of Theorem 2 in \cite{GO}.
%\end{itemize}

\subsection{The $\CA_1$ resolution.}
Let
\[ \CE_B^{\text{Pairs}} = [q^{-1}] \CE^{\text{Pairs}} \quad \text{ and } \quad \CE_B^{\Hilb} = [q^{-1}] \CE^{\Hilb} \]
be the restriction of $\CE^{\Hilb}$ and $\CE^{\text{Pairs}}$ to the case of the class $\beta_0 = B$.\footnote{We denote with $[q^{-1}]$ the operator that extracts the $q^{-1}$ coefficient.}
The corresponding local case was considered before in \cite{MODT, MOH}.
Define operators $\CE_{B}^{(r)}$ by
\begin{align*}
\blangle 1_S\ |\ \CE_{B}^{(r)} 1_S \brangle & = \frac{y}{(1+y)^2} \delta_{0r} \\
[ \Fp_{m}(\gamma), \CE_B^{(r)} ] & = \blangle \gamma, B \brangle  \left( (-y)^{-m/2} - (-y)^{m/2} \right) \CE_B^{(r+m)}
\end{align*}
for all $m \neq 0$ and $\gamma \in H^{\ast}(S)$, see \cite[Section 5.1]{MOH}.
%The local case corresponding to the isolated curve $B \subset S$ with normal bundle $\CO_{B}(-2)$
%was considered in \cite{MODT, MOH}. 
Translating the results of \cite{MODT,MOH} to the 
$K3$ surface leads to the following evaluation.

\begin{thm} \label{8765} We have
\[ \CE_B^{\Hilb} + \frac{y}{(1+y)^2} {\rm Id_{\CF(S)}} = y^{-L_0} \CE_B^{\text{Pairs}} = \CE_{B}^{(0)} \,. \]
\end{thm}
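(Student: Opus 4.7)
The plan is to reduce the identity to the known $\CA_1$-resolution results of \cite{MODT, MOH}. Since $B$ is a section of the elliptic fibration, it is a smooth rational curve with $B^2 = -2$, and a formal neighborhood of $B$ in $S$ is isomorphic to $\mathrm{Tot}(\CO_{\BP^1}(-2))$, the resolved $\CA_1$ singularity. Any stable map or stable pair representing a class of the form $C(B) + kA$ on $S^{[d]}$, or $(B,d)$ on $Y = S \times R$, is supported set-theoretically in this neighborhood. By deformation and degeneration arguments, the reduced $K3$ invariants therefore coincide with equivariant invariants on the local $\CA_1$ geometry specialized to the Calabi-Yau direction $t_1 + t_2 = 0$, in line with the framework used in \cite{MODT,MOH}.

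First I would verify the vacuum evaluation $\blangle 1_S \, | \, \CE_B^{(0)} 1_S \brangle = \frac{y}{(1+y)^2}$. Restricting Step 1 of the definition of $\CE^{(r)}$ to the $q^{-1}$ coefficient gives $\CE_B^{(0)}|_{\CF_0(S)} = [q^{-1}] \frac{1}{F(y,q)^2 \Delta(q)}$. Using $F(z,\tau)|_{q=0} = -i(p^{1/2} - p^{-1/2})$ with $p = -y$, one computes $F^2|_{q=0} = 2 + y + y^{-1}$, so $[q^{-1}]\frac{1}{F^2 \Delta} = \frac{1}{2 + y + y^{-1}} = \frac{y}{(1+y)^2}$. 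Next, I would extract the $q^{-1}$ coefficient of the recursion in Step 2 of Section 5.3 and check, using the initial expansions of the $\varphi_{m,\ell}$ listed earlier, that only the $\ell = 0$ summand survives at $q = 0$ and that $\varphi_{m,0}|_{q=0}$ produces the factor $\blangle \gamma, B \brangle \bigl((-y)^{-m/2} - (-y)^{m/2}\bigr)$. Combined with the vacuum value, this exactly matches the defining relations of $\CE_B^{(r)}$ in \cite[Section 5.1]{MOH}.

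Having identified $\CE_B^{(0)}$ with the local operator of \cite{MOH}, the equality $\CE_B^{(0)} = \CE_B^{\Hilb} + \frac{y}{(1+y)^2}\,\mathrm{Id}$ follows from the quantum cohomology computation of $\Hilb^d$ of the $\CA_1$ resolution in \cite{MOH}; the correction $\frac{y}{(1+y)^2}\,\mathrm{Id}$ arises because the $\CF_0(S)$ component of $\CE_B^{\Hilb}$ vanishes trivially while $\CE_B^{(0)}$ retains its vacuum value. The equality $y^{-L_0}\, \CE_B^{\text{Pairs}} = \CE_B^{(0)}$ follows from the stable pairs computation on the $\CA_1 \times \BP^1$ geometry in \cite{MODT}, followed by the standard variable change $y = -\exp(iu)$; the $y^{-L_0}$ conjugation absorbs the shift between the Euler characteristic grading on the pairs side and the bidegree grading on the Hilbert side.

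The main obstacle is the precise matching of the reduced $K3$ virtual class with the Calabi-Yau specialization of the equivariant local virtual class on the $\CA_1$ resolution. One must verify that no contributions from curves lying outside the formal neighborhood of $B$ enter the computation, and that the reduction procedure introduces exactly the multiplicative factor compatible with the $t_1 + t_2 = 0$ specialization used in \cite{MODT,MOH}. Once this identification is established, the theorem follows from the local results by $\BQ$-linearity of the action of the Nakajima operators on $\CF(S)$.
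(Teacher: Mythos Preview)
Your overall strategy --- reduce to the local $\CA_1$ geometry near the $(-2)$-curve $B$ and invoke the results of \cite{MODT,MOH} --- is exactly what the paper does; the paper's entire argument is the sentence ``Translating the results of \cite{MODT,MOH} to the $K3$ surface leads to the following evaluation.'' Your final paragraph on matching the reduced $K3$ virtual class with the Calabi--Yau specialization of the equivariant local class correctly isolates the only substantive point.

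However, your second paragraph rests on a misreading. In the paper, $\CE_B^{(r)}$ is \emph{defined} directly by the two displayed relations (the vacuum value $\frac{y}{(1+y)^2}\delta_{0r}$ and the commutator with $\Fp_m(\gamma)$), taken from \cite[Section 5.1]{MOH}. It is \emph{not} defined as $[q^{-1}]\CE^{(r)}$. Your computations there --- evaluating $[q^{-1}]\frac{1}{F^2\Delta}$ and extracting $[q^{-1}]$ from the Step~2 recursion --- are aimed at proving $\CE_B^{(r)} = [q^{-1}]\CE^{(r)}$, which is neither part of the theorem nor currently provable: the functions $\varphi_{m,\ell}$ for $|m|>1$ are determined only conjecturally (Conjecture~E), and the $q=0$ expansions you would need are stated in the paper merely as numerical expectations. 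Indeed, the paper explicitly presents the compatibility of Theorem~\ref{8765} with Conjectures~F and~G as a consistency check, not as an established fact. Drop that paragraph entirely; once the virtual-class identification is made, the equalities $\CE_B^{\Hilb} + \frac{y}{(1+y)^2}\mathrm{Id} = \CE_B^{(0)}$ and $y^{-L_0}\CE_B^{\text{Pairs}} = \CE_B^{(0)}$ are direct translations of \cite{MOH} and \cite{MODT} respectively, with $\CE_B^{(0)}$ already being, by definition, the local operator.
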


\noindent From numerical experiments \cite{GO}, we expect the expansions
\begin{alignat*}{2}
 \varphi_{m,0} & = \left( (-y)^{-m/2} - (-y)^{m/2} \right) + O(q) \quad \quad && \text{ for all } m \neq 0 \\
 \varphi_{m,\ell} & = O(q) && \text{ for all } \ell \neq 0, m \in \BZ \,.
\end{alignat*}
Because of
\[ [q^{-1}] \frac{G^{L_0}}{F^2 \Delta} = \frac{y}{(1+y)^2} \text{Id}_{\CF(S)}\, , \]
we find conjectures F and G to be in complete agreement with Theorem \ref{8765}.

\noindent From Theorem \ref{8765}, we obtain the interesting relation
\[ \Tr_{\CF(S)} q^{L_0 - 1} \CE_B^{\text{Pairs}} = \frac{1}{y + 2 + y^{-1}}\, \frac{1}{q} \prod_{m \geq 1} 
\frac{1}{(1 + y^{-1} q^m)^2 (1 - q^m)^{20} (1 + y q^m)^2}\ . \]
By the symmetry of $\chi_{10}$ in the variables $q$ and $\tilde{q}$, we obtain agreement with Conjecture A.

% 
% \begin{equation*}
% \mathsf{N}^{X\bullet}_{\beta_0}  = 
% \sum_{g\in \mathbb{Z}} \sum_{d\geq 0} \mathsf{N}^{X\bullet}_{g,\beta_0,d}\, u^{2g-2}
% \tilde{q}^{d-1}
% =  \frac{1}{F(u,\tilde{q})^2\Delta(\tilde{q})}\ .
% \end{equation*}
% In other words,
% \begin{equation*}
% \mathsf{N}^{X\bullet}_{\beta_0}  = 
% \frac{1}{y + 2 + y^{-1}}\, \frac{1}{\tilde{q}} \prod_{m \geq 1} 
% \frac{1}{(1 + y^{-1} \tilde{q}^m)^2 (1 - \tilde{q}^m)^{20} (1 + y \tilde{q}^m)^2}\ .
% \end{equation*}

\section{Motivic theory}
Let $S$ be a nonsingular projective $K3$ surface, and let 
 $\beta \in \text{Pic}(S)$ be a positive
class (with respect to any ample polarization).
We will assume $\beta$ is {\em irreducible} (not expressible
as a sum of effective classes).

To unify our study with \cite{KKP}, we end the paper with a discussion of
the motivic stable pairs invariants of 
$$X=S \times E\ $$
in class $(\beta,d)$.
Following the conjectural perspective of \cite{KKP},
we assume the Betti realization of the motivic invariants of
$X$ is {\em both} well-defined {\em and} independent of
deformations of $S$ for which $\beta$  remains algebraic
and irreducible. 

We define a generating function $\mathsf{Z}$ of the 
Betti realizations of the motivic stable pairs theory of $X$
in classes $(\beta_h,d)$ where $\beta_h$ is irreducible
and satisfies 
$$\langle \beta_h, \beta_h \rangle = 2h-2\, . $$
The series $\mathsf{Z}$ depends upon the variables
$y$, $q$, $\tilde{q}$ just as before and a new variable $u$
for the virtual Poincare polynomial:
\[ \mathsf{Z}(u,y,q,\tilde{q}) = \frac{1}{u^{-1}+2+u}
\sum_{h\geq 0} \sum_{d\geq 0} \mathsf{H}\big(P_n(S\times E, (\beta_h,d))\big)
\, y^n q^{h-1} \tilde{q}^{d-1} \ .\]
Here we follow the notation of \cite[Section 6]{KKP} for the
normalized virtual Poincar\'e polynomial 
\[ \mathsf{H}\big(P_n(S\times E, (\beta_h,d))\big)\in \mathbb{Z}[u,u^{-1}] \,. \]
In the definition of $\mathsf{Z}$,
the prefactor $\frac{1}{u^{-1}+2+u}$ (the reciprocal of the 
normalized Poincar\'e polynomial of $E$) quotients
by the translation action of $E$ on $P_n(S\times E, (\beta_h,d))$.

Because of the $u$ normalization, we have the following symmetry of $\mathsf{Z}$
in the variable $u$:
\begin{enumerate}
\item[(i)] $\mathsf{Z}(u,y,q,\tilde{q})= \mathsf{Z}(u^{-1},y,q,\tilde{q})$\ .
\end{enumerate}
Two further properties which constrain $\mathsf{Z}$
are: 
\begin{enumerate}
\item[(ii)] {\em the specialization $u=-1$ must recover the
stable pairs invariants (determined by Conjectures A and D),
$$\mathsf{Z}(-1,y,q,\tilde{q})= - \frac{1}{\chi_{10}}\ ,$$}
\item[(iii)] {\em the coefficient of ${\tilde{q}}^{-1}$
must specialize to the motivic series of \cite[Section 4]{KKP},
\begin{multline*} 
 {\left(uy-1\right)\left(u^{-1}-{y}^{-1}\right)} \cdot 
\text{\em Coeff}_{\, {\tilde{q}^{-1}}}\Big(\mathsf{Z}(u,y,q,\tilde{q})\Big)
= \\
\prod_{n=1}^\infty
 \frac{1}{
(1-u^{-1}y^{-1} q^n)(1-u^{-1}y q^n) (1-q^n)^{20}(1-uy^{-1}q^n)
(1-u yq^n)} \, ,
\end{multline*}
obtained from the Kawai-Yoshioka calculation \cite{KY}.}
\end{enumerate}

%For a fourth constraint, we conjecture the symmetry of $-\frac{1}{\chi_{10}}$
%in the variables $q$ and $\tilde{q}$ lifts to the motivic theory.

%\vspace{10pt}
%\noindent{\bf{Conjecture H.}}  {\em The symmetry
%$\mathsf{Z}(u,y,q,\tilde{q})= \mathsf{Z}(u,y,\tilde{q},q)$ holds}.
%\vspace{10pt} 

To obtain further constraints, we study the virtual 
Poincar\'e polynomial
$$\mathsf{H}\Big(P_{1-h-d}(S\times E, (h,d))/E\Big) \in \mathbb{Z}[u,u^{-1}]\ $$
which arises as
\begin{equation} \label{jjjj}
\text{Coeff}_{\, y^{1-h-d}q^{h-1}\tilde{q}^{d-1}}\big(\mathsf{Z}\big)\ .
\end{equation}
For $q^{h-1}\tilde{q}^{d-1}$, the coefficient \eqref{jjjj} corresponds to the 
the {\em lowest} order term in $y$. We have an isomorphism of
the moduli spaces{\footnote{We follow the notation of \cite{KKP} for the
moduli of
stable pairs $P_{n}(S,h)$ on $K3$ surfaces.}},
$$P_{1-h-d}(S \times E,(h,d))/E \cong P_{1-h+d}(S,h)\ .$$
Hence, we obtain a fourth constraint for $\mathsf{Z}$.

\vspace{+8pt}
\begin{enumerate}
\item[(iv)] {\em $\text{\em Coeff}_{\, y^{1-h-d}q^{h-1}\tilde{q}^{d-1}}\big(\mathsf{Z}\big)$
equals the $y^{1-h+d}q^{h-1}$ coefficient of
{\tiny{$${\frac{1}{\left(uy-1\right)\left(u^{-1}-{y}^{-1}\right)}
\prod_{n=1}^\infty
 \frac{1}{
(1-u^{-1}y^{-1} q^n)(1-u^{-1}y q^n) (1-q^n)^{20}(1-uy^{-1}q^n)
(1-u yq^n)}}$$}}}
\end{enumerate}
\vspace{+10pt}

The function $-\frac{1}{\chi_{10}}$
has a basic symmetry in the variables $q$ and $\tilde{q}$.
As stated, condition (iv) is not symmetric in $q$ and $\tilde{q}$.
However, the symmetry 
\begin{equation*} 
\text{Coeff}_{\, y^{1-h-d}q^{h-1}\tilde{q}^{d-1}}\big(\mathsf{Z}\big)=
\text{Coeff}_{\, y^{1-h-d}q^{d-1}\tilde{q}^{h-1}}\big(\mathsf{Z}\big)
\end{equation*}
can be easily verified from (iv).
Unfortunately, further calculations show that
the symmetry 
in the variables $q$ and $\tilde{q}$ appears
{\em not} to lift to the motivic theory.

A basic question is to specify the modular
properties of $\mathsf{Z}$.
We hope conditions (i)-(iv) together with
the modular properties of
 $\mathsf{Z}$ will uniquely determine $\mathsf{Z}$.
There is every reason to expect the function $\mathsf{Z}$
will be beautiful.

%A proposal for $\mathsf{Z}$ which satisfies all five of the above
%conditions is given by the formula:
%{{\small{\begin{equation*} \mathsf{Z} =
%\frac{1}{p q \tilde{q}}
% \prod_{(k,h,d)} \left( 1 - u^{4hd-k^2} p^k q^h \tilde{q}^d \right)^{\frac{c(4 h d - k^2)}{2}%}
%\left( 1 - u^{-4hd+k^2} p^k q^h \tilde{q}^d \right)^{\frac{c(4 h d - k^2)}{2}},
%\end{equation*}}}}
%\begin{equation*} \frac{1}{\mathsf{Z}}=
%{\small{{p q \tilde{q}}
% \prod_{(k,h,d)} {\left( 1 - u^{4hd-k^2} p^k q^h \tilde{q}^d \right)^{\frac{c(4% h d - k^2)}{2}}
%\left( 1 - u^{-4hd+k^2} p^k q^h \tilde{q}^d \right)^{\frac{c(4 h d - k^2)}{2}}}%,}}
%\end{equation*}
%where the product is over all  $k \in \BZ$ and
%$h,d \geq 0$ satisfying one of the following two conditions:
%\begin{enumerate}
%\item[$\bullet$]
% $h>0$ or $d>0$ ,
%\item[$\bullet$]
%  $h = d = 0$ and $k < 0$ . 
%\end{enumerate}
%Further testing of the above proposal for $\mathsf{Z}$ should be carried
%out.

\end{document}